\def\string\COMMIT{]]..r..[[}]])
\newcommand{\FF}{\mathbb F}
\newcommand{\cD}{\mathcal D}
\newcommand{\cL}{\mathcal L}
\newcommand{\cG}{\mathcal G}
\newcommand{\Tr}{\mathrm{Tr}}
\newcommand{\cC}{\mathcal C}
\newcommand{\cH}{\mathcal H}
\newcommand{\Nr}{\mathrm{N}\,}
\newcommand{\PG}{\mathrm{PG}}
\newcommand{\N}{\mathrm{N}}
\newcommand{\fB}{\mathfrak B}
\newcommand{\Hom}{\mathrm{Hom}}
\newcommand{\ind}{\mathrm{ind}\,}
\newtheorem{theorem}{Theorem}[section]
\newtheorem{lemma}[theorem]{Lemma}
\newtheorem{corollary}[theorem]{Corollary}
\newtheorem{proposition}[theorem]{Proposition}
\theoremstyle{definition}
\newtheorem{remark}[theorem]{Remark}
\newtheorem{definition}{Definition}
\title{Identifiers for MRD-codes}
\author{Luca Giuzzi and Ferdinando Zullo\thanks{{The
research  was supported by
Ministry for Education, University and Research of Italy MIUR (Project
PRIN 2012 "Geometrie di Galois e strutture di incidenza") and by the Italian National
Group for Algebraic and Geometric Structures and their Applications (GNSAGA
- INdAM).}}}
\begin{document}
\maketitle
\begin{abstract}
  For any admissible value of the parameters $n$ and $k$ there exist
  $[n,k]$-Maximum Rank Distance $\FF_q$-linear codes. Indeed,
  it can be shown that if field extensions large enough are
  considered, \emph{almost all} rank metric codes are MRD.
  On the other hand, very few families up to equivalence
  of such codes are currently known. In the present paper we
  study some invariants of MRD codes and evaluate their value for
  the known families, providing  a new characterization of
  generalized twisted Gabidulin codes.
\end{abstract}

\bigskip
{\it AMS subject classification:} 51E22, 05B25, 94B05

\bigskip
{\it Keywords:} Gabidulin codes, Rank metric, Distinguisher

\section{Introduction}
\label{intro}
Delsarte \cite{Delsarte} introduced  in 1978 rank-distance (RD) codes as
$q$-analogs of the usual linear error correcting codes over finite fields.
In the same paper, he also showed that the
parameters of these codes must obey a
Singleton-like bound and that for any admissible value of the
length $n$ and the dimension $k$ this bound is sharp.
A rank metric code attaining this bound
is called \emph{maximum rank distance} (MRD).
In 1985  Gabidulin \cite{Gabidulin}  independently rediscovered
Rank-distance codes and also devised an algebraic
decoding algorithm,
in close analogy to what happens for Reed-Solomon codes,
for the family of MRD codes described by Delsarte.

More recently MRD-codes have been intensively investigated both
for their applications to network coding and for their links with
remarkable geometric and algebraic objects such as linear sets and semifields
\cite{BZ,CsMP,CsMPZa,CsMPZ,CsMZ,CsZ,CsZ2,delaCruz,Lu2017,LTZ2,Sheekey,ShVdV}.

It has been shown in \cite{H-TNRR} (see also \cite{BR}) that a generic
rank-distance code, provided that the field involved with the construction
is large enough, is MRD.
The authors of \cite{H-TNRR} make extensive use of algebraic geometry methods
and they are also able to
offer an estimate on the probability that a random
rank metric code is MRD as well as
to show that the probability of obtaining a Gabidulin code in this way
is negligible.

In \cite{BR} Byrne and Ravagnani
obtain an approximation of the fraction
of RM-codes of given length and dimension which are MRD
using a mostly combinatorial
approach. Their paper show also that
some care has to be taken when considering
these density results for codes; indeed, the $\FF_{q^m}$--linear MRD codes
are dense in the family of all $\FF_{q^m}$--linear rank metric codes
$\cC\subseteq\FF_{q^m}^n$ of dimension $k$ and length $n$.
However,
this is not the case for $\FF_q$--linear MRD-codes in the
family of $\FF_q$--linear rank metric codes
$\cC\subseteq\FF_q^{m\times n}$ with $\dim\cC=k$; see \cite{BR}.

In spite of the aforementioned density results, very few families
of MRD codes are currently known up to equivalence; basically,
apart from Gabidulin \cite{Gabidulin} and twisted Gabidulin \cite{Sheekey}
codes, the state of the art is given by the codes
presented in Table~\ref{kMRD} and
their Delsarte duals.

A \emph{distinguisher} for a family of codes $\mathfrak F$ is a polynomial time algorithm which can determine if an arbitrary generator matrix $G$
determines a code belonging to $\mathfrak F$ or not.
Existence of distinguishers is interesting not only as a mean to characterize a code, but also of much importance for applications, since some attacks against McEliece cryptosystems based on it.
The case of (generalized) Gabidulin codes is investigated
in~\cite{H-TM}, whose result we recall in Theorem~\ref{gabidulin-d};
{see also \cite{Neri}, where such codes are characterized in terms of their generator matrices}.

The \emph{McEliece cryptosystem} is
a well known and much studied public key cryptosystem
based on error correcting codes.
The basic idea of this encryption scheme is to start with a $t$-error
correcting code endowed with an efficient algorithm for decoding and
hide its generating matrix $G\in\FF_q^{k\times n}$ by
means of a invertible matrix $S$ and a permutation matrix $P$, so that $\hat{G}=SGP$.
Then
 the encryption of a message $m\in\FF_q^k$ is the codeword $c=m\hat{G}+e$
 where $e\in\FF_q^n$ is a noise vector of weight at most $t$.
 In order to discuss the security of this cryptosystem we recall the
 model of \emph{indistinguishability under chosen plaintext attack}
 (IND-CPA). A system is secure under this model if an adversary which does
 not know the key is unable to distinguish between the encodings
 $c_1$ and $c_2$ of any two different messages $m_1$ and $m_2$ she has
 suitably chosen.

 Observe that in the case of McEliece cryptosystem,
 given two distinct messages $m_1,m_2$
 with encodings respectively $c_1$ and $c_2$, the word $c_1-c_2$ has distance
 at most $2t$ from $(m_1-m_2)\hat{G}$. So, when we consider codes
 endowed with Hamming distance and $t$ comparatively ``small'' it is
 easy to see that
 $(m_1-m_2)\hat{G}$ has almost everywhere the
 same components as $c_1-c_2$. This makes IND-CPA easier to thwart.
 Using the rank metric instead of the Hamming metric can improve the
 security. So a potential primary application of MRD codes is for
 McEliece--like cryptosystems.

 Unfortunately,
 even if ``almost all $\FF_{q^m}$--linear codes are MRD'',
 very few of them are known and even less are amenable to efficient decoding.
 The possibility of using Gabidulin codes has been considered in~\cite{H-TM}. The authors in \cite{H-TM} however
 proved that there is a very efficient distinguisher for them; more in detail, it is possible to easily recognize a Gabidulin code from a generic MRD code of the same parameters chosen uniformly at random.
As a consequence, the cryptosystems based on them turn out not to be
semantically secure, as it is possible to distinguish a ciphertext from
a random vector; see also \cite{Overbeck,PRW}.

In the present paper we investigate the existence of algebraic distinguishers
(akin to those of \cite{H-TM}) for the currently known families of $\mathbb{F}_{q^n}$-linear MRD codes and provide some invariants up to equivalence.

Our main results concern the list of dimensions of the intersections
of an $\mathbb{F}_{q^n}$-linear MRD-code with its conjugates and a description of a maximum dimension Gabidulin codes contained in a fixed MRD-code. We shall see, in particular,
that this can be used as to provide distinguishers for the generalized
twisted Gabidulin codes and how it can also be applied to the other $5$
known families (and their duals), see Tables \ref{kMRD} and \ref{DkMRD}.
{We point out that our results answer to the open question \cite[Open Problem II.7.]{B}.}

\subsection{Structure of the paper}
In Section~\ref{prelim} we recall the definitions of rank metric (RM) codes
and their basic properties. We also fix our notation and discuss in
Section~\ref{qpoly} the representation of RM-codes by means of
subspaces of linearized polynomials, the representation
which shall be used in most of
the paper. Section~\ref{vecnum} deals with an alternative convenient
representation of RM-codes.
In Section~\ref{charact} we prove one of our main results, namely
the characterization of generalized twisted Gabidulin codes in terms
of the intersection with their conjugates and the Gabidulin subcode
they contain; see Theorem~\ref{mth1}.
This leads to the introduction in Section~\ref{disting} of
two indexes
\[ h(\cC):=\max\{ \dim(\cC\cap\cC^{[j]})\colon j=1,\ldots,n-1; \gcd(j,n)=1 \}. \]
and
\[ \ind(\cC):=\max\{ \dim \cG : \cG\subseteq \cC \mbox{ is equivalent to a generalized
    Gabidulin code} \} \]
for MRD-codes. These indexes are then evaluated for the known families of
codes. We conclude the paper with some open problems.

\section{Preliminaries}
\label{prelim}
Denote by $\FF_q$ a finite field and let $V_m$ and $V_n$ be
two vector spaces over $\FF_q$ of dimension respectively $m$ and $n$.
The vector space $\Hom_q(V_n,V_m)$ of all $\FF_q$--linear transformations $V_n\to V_m$
is naturally endowed with a \emph{rank distance}
$d_R:\Hom_q(V_n,V_m)\times\Hom_q(V_n,V_m)\to{\mathbb N}$ where $d_R(\varphi,\psi):=\dim \mathrm{Im}(\varphi-\psi)$.
If we fix bases in $V_m$ and $V_n$ we have that
$\Hom_q(V_n,V_m)$ is isometric to the vector space $\FF_q^{m\times n}$ of all
$m\times n$ matrices over $\FF_q$ endowed with the distance
$d(A,B):=rk\,(A-B)$ for all $A,B\in\FF_q^{m\times n}$.

A \emph{rank metric code} or a (also \emph{rank distance code}), in brief RM-code, $\cC$  of parameters
$(m,n,q;d)$ is a subset $\cC$ of $\FF_q^{m\times n}$ with
minimum rank distance $d:=\min_{{A,B \in \cC},\ {A\ne B}} \{ d(A,B) \}$.
A RM-code $\cC$ is $\FF_q$-linear if it is an $\FF_q$-vector subspace of
$\FF_q^{m\times n}$ (or, equivalently, of $\Hom_q(V_m,V_n)$).
When $\cC$ is an $\FF_q$-linear RM-code of dimension $k$
contained in $\FF_q^{n\times n}$, we shall also write,
in brief, that $\cC$ has parameters $[n,k]$.

As mentioned in Section~\ref{intro}, it has been shown in
\cite{Delsarte} that an analogue of the Singleton bound holds for
RM-codes;  namely, if $\cC$ is an $(m,n,q;d)$ RM-code, then
\[ |\cC| \leq q^{\max \{m,n\}(\min \{m,n\}-d+1)}. \]
When this bound is achieved, then $\cC$ is an \emph{MRD-code}.

The \emph{Delsarte dual} code of a linear RM-code $\cC\subseteq  \FF_q^{m \times n}$ is defined as
\[ \cC^\perp=\{ M \in \FF_q^{m \times n} \colon \mathrm{Tr}(MN^t)=0 \hspace{0.1cm}\text{for all}\hspace{0.1cm} N \in \cC\}. \]

\begin{lemma}\label{dualMRD}\cite{Delsarte,Gabidulin}
Let $\cC\subseteq \FF_{q}^{m \times n}$ be an $\FF_q$-linear MRD-code of dimension $k$ with $d>1$. Then the Delsarte dual code $\cC^\perp\subseteq \FF_{q}^{m\times n}$ is an MRD-code of dimension $mn-k$.
\end{lemma}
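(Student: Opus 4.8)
The statement to prove is Lemma~\ref{dualMRD}: the Delsarte dual of an $\FF_q$-linear MRD-code of dimension $k$ (with $d>1$) is again MRD, of dimension $mn-k$. The plan is to separate the two assertions. The dimension count is immediate: $\cC^\perp$ is the orthogonal complement of $\cC$ with respect to the nondegenerate bilinear form $(M,N)\mapsto\Tr(MN^t)$ on $\FF_q^{m\times n}$, so $\dim\cC^\perp=mn-\dim\cC=mn-k$. The substance of the lemma is that $\cC^\perp$ attains the Singleton-like bound, i.e.\ that its minimum distance $d^\perp$ satisfies $|\cC^\perp|=q^{\max\{m,n\}(\min\{m,n\}-d^\perp+1)}$.

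First I would record that, since $\cC$ is MRD of dimension $k$, the Singleton bound forces $k=\max\{m,n\}(\min\{m,n\}-d+1)$; write $r=\min\{m,n\}$ and $s=\max\{m,n\}$, so $k=s(r-d+1)$ and hence $d=r-\frac{k}{s}+1$. Similarly, to show $\cC^\perp$ is MRD of dimension $mn-k=sr-k$ it suffices to show its minimum distance equals $d^\perp:=r-\frac{sr-k}{s}+1=\frac{k}{s}+1$; combined with the general Singleton bound $d^\perp\le r-\frac{\dim\cC^\perp}{s}+1$ (which gives ``$\le$''), the whole thing reduces to the lower bound $d^\perp\ge \frac{k}{s}+1$. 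Equivalently, one must show that $\cC^\perp$ contains no nonzero matrix of rank $\le k/s$.

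The key step, and the main obstacle, is this rank lower bound for $\cC^\perp$. The clean way to get it is via the MacWilliams-type identities for the rank distribution of a linear rank-metric code, due to Delsarte: the weight distribution $\{A_i\}$ of $\cC$ and $\{A_i^\perp\}$ of $\cC^\perp$ are related by a $q$-analogue of the Krawtchouk transform, and Delsarte showed that an $(m,n,q;d)$ code is MRD precisely when its weight distribution is the (uniquely determined) ``extremal'' one, i.e.\ the $A_i$ for $i\ge d$ are forced by the parameters. Applying the MacWilliams transform to this extremal distribution and checking that the resulting $A_i^\perp$ vanish for $1\le i\le k/s$ yields $d^\perp\ge k/s+1$; then the Singleton bound for $\cC^\perp$ forces equality and hence that $\cC^\perp$ is MRD. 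The computation with the $q$-binomial Krawtchouk coefficients is the one genuinely technical part.

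An alternative I would keep in reserve, avoiding the full MacWilliams machinery, is a direct argument: suppose for contradiction that $\cC^\perp$ has a nonzero element $N$ of rank $\rho\le k/s$. After acting by invertible matrices on the left and right (which preserves both the rank metric and, suitably adjusted, the duality pairing), assume $N$ is supported on a $\rho\times\rho$ block. Then the condition $\Tr(MN^t)=0$ for all $M\in\cC$ says that $\cC$ lies in a hyperplane cutting out $\rho^2$ ``coordinates'', and one counts that the ``puncturing/shortening'' of an MRD code to the complementary set of coordinates still has too many codewords to fit below the Singleton bound there, a contradiction. Either route finishes the proof; I would present the MacWilliams-duality version as the main line since it is the classical statement of Delsarte and Gabidulin being cited, and the brace-pushing with $q$-Krawtchouk polynomials is the only place real care is needed.
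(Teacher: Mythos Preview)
The paper does not supply a proof of this lemma at all: it is stated with the citation \cite{Delsarte,Gabidulin} and used as a black box. So there is no ``paper's own proof'' to compare against; your main line via the rank-metric MacWilliams identities is exactly the classical argument of Delsarte that the citation points to, and it is correct.

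One remark on your reserve alternative: as written it does not work. A single nonzero $N\in\cC^\perp$ of rank $\rho$ gives only one linear equation $\Tr(MN^t)=0$ on $\cC$, not ``$\rho^2$ coordinates'' worth of constraints, so the puncturing/shortening count you sketch does not go through from that hypothesis alone. A correct direct argument in this style needs more: one typically shows that if $\cC^\perp$ had minimum distance $\le k/s$, then the \emph{shortened} code of $\cC$ on an appropriate $(\rho\times n)$ or $(m\times\rho)$ strip would violate the Singleton bound, and this uses that shortening an MRD code on a row- or column-support yields another MRD code (a separate fact). Since you are presenting the MacWilliams route as the main proof this is harmless, but the alternative as stated has a gap.
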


The weight of a codeword $c\in\cC$ is just the rank of the matrix
corresponding to $c$. The spectrum of weights of a MRD-code is
``complete'' in the following sense (which is a weaker form of \cite[Theorem 5]{Gabidulin}).

\begin{corollary}\cite[Lemma 2.1]{LTZ2}\label{weight}
  Let $\cC$ be an MRD-code in $\FF_q^{m\times n}$ with minimum distance $d$ and
  suppose $m \leq n$. Assume that the null matrix $O$ is in $\cC$.
  Then, for any $0 \leq l \leq m-d$, there
  exists at least
  one matrix $C \in \cC$ such that $\mathrm{rk} (C) = d + l$.
\end{corollary}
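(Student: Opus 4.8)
The plan is to reduce the statement to counting, using the MRD property to pin down the number of codewords of each rank exactly, or at least to derive a contradiction if some intermediate rank were missing. First I would recall that $\cC$ is an $\FF_q$-linear subspace of $\FF_q^{m\times n}$ with $m\le n$, minimum distance $d$, and $|\cC|=q^{n(m-d+1)}$ by the MRD bound. Since $O\in\cC$, every nonzero codeword has rank at least $d$, and of course rank at most $m$. So the possible weights lie in $\{0\}\cup\{d,d+1,\dots,m\}$, and I must show each value $d+l$ with $0\le l\le m-d$ is actually attained. The natural tool is to look at the restriction/puncturing behaviour of MRD codes: shortening an MRD code in a suitable way again yields an MRD code with a smaller minimum distance, and then the nonexistence of weight $d+l$ would force a code that is too small or too large for its parameters.

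The cleanest route I would take is an inductive shortening argument. Consider the subcode $\cC'$ consisting of all $C\in\cC$ whose first row (equivalently, whose image is contained in a fixed hyperplane of $V_m$, after a change of basis) is zero. This is the kernel of an $\FF_q$-linear map $\cC\to\FF_q^{n}$, so $\dim\cC'\ge \dim\cC - n = n(m-d+1)-n = n(m-1-d+1)$. On the other hand $\cC'$ sits inside $\FF_q^{(m-1)\times n}$ and has minimum distance at least $d$; by the Singleton bound applied with parameters $(m-1,n,q)$ it has dimension at most $n((m-1)-d+1)$. Hence $\cC'$ is itself an MRD code in $\FF_q^{(m-1)\times n}$ with the same minimum distance $d$, and it contains $O$. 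By induction on $m$ (the base case $m=d$ being trivial, where the only weight to produce is $d$ itself and any nonzero codeword does it) $\cC'$ — and hence $\cC$ — contains a codeword of every rank $d,d+1,\dots,m-1$. It remains to produce a codeword of rank exactly $m$. For this I would argue by counting: the number of matrices in $\FF_q^{m\times n}$ of rank at most $m-1$ is strictly smaller than $q^{n(m-d+1)}=|\cC|$ as soon as $d\ge 1$ (a standard estimate, since the rank-$\le r$ locus has size on the order of $q^{r(m+n-r)}$, and $(m-1)(m+n-(m-1)) < n(m-d+1)$ reduces to a clean inequality in $m,n,d$), so $\cC$ cannot consist only of matrices of rank $\le m-1$; a full-rank codeword must exist.

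The main obstacle I anticipate is making the two numerical facts airtight: first, that the shortened code $\cC'$ genuinely meets the Singleton bound (the dimension count above is tight only because the MRD hypothesis on $\cC$ forces the map $\cC\to\FF_q^{n}$ recording the first row to be surjective, which one should justify — e.g. via Lemma~\ref{dualMRD} and the fact that MRD codes have no ``trivially zero'' coordinates), and second, the rank-counting inequality $|\{\rank\le m-1\}|<|\cC|$, which needs the exact Gaussian-binomial expression for the number of matrices of bounded rank rather than a loose bound when $d$ is large, say $d=m$. An alternative to the counting step for the top rank is to invoke the full weight-distribution formula for MRD codes (from \cite{Delsarte} or \cite{Gabidulin}), which gives the number $A_{d+l}$ of codewords of each weight explicitly and shows it is positive for every $0\le l\le m-d$; since the statement is explicitly flagged as a weak form of \cite[Theorem~5]{Gabidulin}, quoting that distribution would shorten the argument considerably, but the inductive shortening proof above has the advantage of being self-contained modulo the Singleton bound already recalled in the excerpt.
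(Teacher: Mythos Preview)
The paper does not prove this corollary; it is simply stated with a citation to \cite[Lemma~2.1]{LTZ2} and flagged as a weak form of the explicit MRD weight enumerator \cite[Theorem~5]{Gabidulin}. Your fallback of quoting that weight distribution is therefore exactly the paper's route. Your self-contained shortening argument for ranks $d,\dots,m-1$ is correct and pleasant (and, with a little extra care about fibres of the first-row map, does not even need the $\FF_q$-linearity you assume but which is not part of the hypothesis).

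The genuine gap is the top-rank step. The claimed inequality $|\{A\in\FF_q^{m\times n}:\rank A\le m-1\}|<|\cC|$ is not merely loose but false in general: for $q=2$, $m=n=3$, $d=2$ one has $|\cC|=2^6=64$, whereas the number of singular $3\times 3$ matrices over $\FF_2$ is $2^9-|\GL_3(\FF_2)|=512-168=344$. Sharpening your estimate to the exact Gaussian-binomial count cannot help, since the inequality points the wrong way. What \emph{does} work, staying within your own framework, is a union bound over the shortened subcodes: you have already shown that $|\cC_v|=|\{C\in\cC:v^{T}C=0\}|=q^{n(m-d)}$ for every nonzero $v\in\FF_q^m$, so if no codeword had rank $m$ then
\[
|\cC|-1\;\le\;\frac{q^m-1}{q-1}\bigl(q^{n(m-d)}-1\bigr),
\]
and one checks directly that the right-hand side is strictly less than $q^{n(m-d+1)}-1$ for all $n\ge m\ge d\ge 1$, giving the required contradiction.
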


Existence of MRD-codes for all possible values $(m,n,q;d)$ of
the parameters has been originally settled in~\cite{Delsarte} where
\emph{Singleton systems} are constructed and,
independently by Gabidulin in~\cite{Gabidulin}; this has also been
generalized in \cite{kshevetskiy_new_2005}.

More recently Sheekey~\cite{Sheekey} discovered a new family of
linear maximum rank metric codes for all possible parameters
which are inequivalent to those above; see also \cite{LTZ}.
Other examples of MRD-codes can be found in \cite{CMP,DS,OOz,OOz2,Sh2018,TZ}.
For some chosen values of parameters there are a few other families of $\mathbb{F}_{q^n}$-linear MRD-codes of $\mathbb{F}_q^{n\times n}$
which are currently known; see~\cite{CsMPZa,CsMPZh,CsMZ}.

The interpretation of linear RM-codes as homomorphisms of vector
spaces prompts the following definition of \emph{equivalence}.
Two RM-codes $\cC$ and $\cC'$ of $\FF_q^{m\times n}$ are \emph{equivalent} if and only if they represent the same homomorphism (up to a change of basis of $V_m$ and $V_n$) in $h\in\Hom_q(V_m,V_n)/\mathrm{Gal}(\FF_{q})$.
This is the same as to say that
there exist two invertible matrices $A\in \FF_q^{m \times m}$, $B\in \FF_q^{n \times n}$ and a field automorphism $\sigma$ such that
$\{A C^\sigma B \colon C\in \cC\}=\cC'$. 

In general, it is difficult to determine whether two RM-codes are equivalent or not.
The notion of \emph{idealiser} provides an useful criterion.

Let $\cC\subset \FF_q^{m\times n}$ be an RM-code; its left and right idealisers
$L(\cC)$ and $R(\cC)$ are defined as
\[ L(\cC)=\{ Y \in \FF_q^{m \times m} \colon YC\in \cC\hspace{0.1cm} \text{for all}\hspace{0.1cm} C \in \cC\}\]
\[ R(\cC)=\{ Z \in \FF_q^{n \times n} \colon CZ\in \cC\hspace{0.1cm} \text{for all}\hspace{0.1cm} C \in \cC\},\]
see \cite[Definition 3.1]{LN2016}.
These sets appear also in \cite{LTZ2}, where they are
respectively called middle nucleus and right nucleus;
therein the authors prove the following result.
\begin{proposition}\cite[Proposition 4.1]{LTZ2}\label{idealis}
  If $\cC_1$ and $\cC_2$ are equivalent linear RM-codes, then their
  left (resp. right) idealisers are also equivalent.
\end{proposition}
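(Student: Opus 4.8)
The plan is to unwind the definition of equivalence and track explicitly how the two idealisers transform. By hypothesis there are invertible matrices $A\in\FF_q^{m\times m}$ and $B\in\FF_q^{n\times n}$ together with $\sigma\in\mathrm{Gal}(\FF_q)$ such that $\cC_2=\{A\,C^\sigma\,B\colon C\in\cC_1\}$. The first step is to record two elementary facts: entrywise application of $\sigma$ is a ring automorphism, so $(XY)^\sigma=X^\sigma Y^\sigma$ for matrices of compatible sizes, and it preserves rank; hence $\cC_1^\sigma:=\{C^\sigma\colon C\in\cC_1\}$ is again a linear RM-code. From the definitions one then gets at once, writing $L(\cC_1)^\sigma:=\{Y^\sigma\colon Y\in L(\cC_1)\}$, that $L(\cC_1^\sigma)=L(\cC_1)^\sigma$ and $R(\cC_1^\sigma)=R(\cC_1)^\sigma$, since $YC\in\cC_1$ if and only if $Y^\sigma C^\sigma\in\cC_1^\sigma$, and similarly on the right.

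Next I would compute $L(\cC_2)$ directly. A matrix $Y\in\FF_q^{m\times m}$ lies in $L(\cC_2)$ exactly when, for every $C\in\cC_1$, the product $Y(A\,C^\sigma\,B)$ again belongs to $\cC_2$, i.e. equals $A\,D^\sigma\,B$ for some $D\in\cC_1$. Cancelling the invertible $B$ on the right and $A$ on the left, this is equivalent to $(A^{-1}YA)\,C^\sigma\in\cC_1^\sigma$ for all $C\in\cC_1$, that is, to $A^{-1}YA\in L(\cC_1^\sigma)=L(\cC_1)^\sigma$. Therefore $L(\cC_2)=\{A\,Y^\sigma\,A^{-1}\colon Y\in L(\cC_1)\}$, and taking $A$, $A^{-1}$ and $\sigma$ as the two invertible matrices and the field automorphism in the definition of equivalence shows that $L(\cC_1)$ and $L(\cC_2)$ are equivalent RM-codes in $\FF_q^{m\times m}$.

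The right idealiser is handled symmetrically: $Z\in R(\cC_2)$ means $A\,C^\sigma\,B\,Z=A\,D^\sigma\,B$ for some $D\in\cC_1$, so after cancelling (this time $A$ drops out) one gets $C^\sigma(BZB^{-1})\in\cC_1^\sigma$ for all $C$, hence $BZB^{-1}\in R(\cC_1^\sigma)=R(\cC_1)^\sigma$, and $R(\cC_2)=\{B^{-1}\,Z^\sigma\,B\colon Z\in R(\cC_1)\}$ is equivalent to $R(\cC_1)$. There is no real obstacle beyond bookkeeping; the only points needing a little care are to keep the actions of $\sigma$ and $\sigma^{-1}$ straight and to notice that for the left idealiser it is the matrix $A$ that survives the cancellation while $B$ disappears, and vice versa for the right idealiser.
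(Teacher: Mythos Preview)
Your proof is correct. The paper does not give its own proof of this proposition; it is quoted as \cite[Proposition~4.1]{LTZ2} and used as a black box, so there is nothing to compare your argument against here. Your direct computation---showing $L(\cC_2)=A\,L(\cC_1)^\sigma A^{-1}$ and $R(\cC_2)=B^{-1}R(\cC_1)^\sigma B$ from the definitions---is exactly the natural way to verify the claim, and the bookkeeping (in particular the identity $L(\cC_1^\sigma)=L(\cC_1)^\sigma$ via the fact that entrywise $\sigma$ is a ring automorphism) is handled correctly.
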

Right idealisers are usually effective as distinguishers for
RM-codes, i.e. non-equivalent RM-codes often have non-isomorphic
idealisers. This is in sharp contrast with the role played by
left idealisers which, for the codes we consider in the present
paper, are always isomorphic to $\FF_{q^n}$.

\subsection{Representation of RM-codes as linearized polynomials}
\label{qpoly}
Any RM-code over $\FF_q$ can
be equivalently defined  either as a subspace of matrices in $\FF_{q}^{m\times n}$
or as a subspace of $\Hom_q(V_n,V_m)$. In the present section we
shall recall a specialized representation in terms of
linearized polynomials which we shall
use in the rest of the paper.

Consider two vector spaces $V_n$ and $V_m$ over $\FF_q$. If $n\geq m$
we can always regard $V_m$ as a subspace of $V_n$ and identify
$\Hom_q(V_n,V_m)$ with the subspace of those $\varphi\in\Hom_q(V_n,V_n)$ such that
$\mathrm{Im}(\varphi)\subseteq V_m$. Also, $V_n\cong\FF_{q^n}$,
when $\FF_{q^n}$ is considered as a $\FF_q$-vector space of dimension $n$.
Let now  $\Hom_q(\FF_{q^n}):=\Hom_q(\FF_{q^n},\FF_{q^n})$ be the set of all
$\FF_q$--linear maps of $\FF_{q^n}$ in itself.
It is well known that each element of $\Hom_q(\FF_{q^n})$ can be
represented in a unique way as a linearized polynomial over $\FF_{q^n}$;
see \cite{lidl_finite_1997}.
In other words, for any $\varphi\in\Hom_q(\FF_{q^n})$ there is
an unique polynomial $f(x)$ of the form
\[ f(x):=\sum_{i=0}^{n-1} a_i x^{q^i}=\sum_{i=0}^{n-1} a_ix^{[i]} \]
with $a_i \in \FF_{q^n}$ and $[i]:=q^i$ such that
\[ \forall x\in\FF_{q^n}:\varphi(x)=f(x). \]
The set $\cL_{n,q}$ of the linearized polynomials over $\FF_{q^n}$ is
a vector space over $\FF_{q^n}$ with respect to the
usual sum and scalar multiplication  of dimension $n$.
When it is regarded as a
vector space over $\FF_q$, its dimension is $n^2$
and it is isomorphic to $\FF_q^{n\times n}$.
We shall use this point of view in the present paper.
Actually, $\cL_{n,q}$ endowed with the product $\circ$ induced by the functional
composition in $\Hom_q(\FF_{q^n})$ is an algebra over $\FF_{q}$.
In particular, given any two
linearized polynomials $f(x)=\sum_{i=0}^{n-1}f_i x^{[i]}$ and
$g(x)=\sum_{j=0}^{n-1}g_j x^{[j]}$, we can write
\[ (f\circ g)(x):=\sum_{i=0}^{n-1}\sum_{j=0}^{n-1} f_ig_j^{[i]} x^{[(i+j)\mathrm{mod}\, n]}. \]

Take now $\varphi\in\Hom_q(\FF_{q^n})$ and let
$f(x)=\sum_{i=0}^{n-1}a_ix^{[i]}\in\cL_{n,q}$
be the associated linearized polynomial.
The \emph{Dickson (circulant) matrix} associated to $f$ is
\[D_f:=
\begin{pmatrix}
a_0 & a_1 & \ldots & a_{n-1} \\
a_{n-1}^{[1]} & a_0^{[1]} & \ldots & a_{n-2}^{[1]} \\
\vdots & \vdots & \vdots & \vdots \\
a_1^{[n-1]} & a_2^{[n-1]} & \ldots & a_0^{[n-1]}
\end{pmatrix}
.\]
It can be seen that the rank of the matrix $D_f$ equals the rank of the $\FF_q$-linear map $\varphi$, see for example \cite{wl} and also \cite{CsMPZ2018,GQ,GS18}.

By the above remarks, it is straightforward to see that any
$\FF_q$-linear RM-code might be regarded as a suitable $\FF_q$-subspace
of $\cL_{n,q}$. This approach shall be extensively used  in the present
paper. In order to fix the notation and ease the reader, we shall
reformulate some of the notions recalled before in terms of linearized polynomials.

A linearized polynomial is called \emph{invertible} if it admits inverse
with respect to $\circ$ or, in other words, if its Dickson matrix
has non-zero determinant. In the remainder of this paper we shall
always silently identify the elements of $\cL_{n,q}$ with the
morphisms of $\Hom_q(\FF_{q^n})$ they represent and, as such,
speak also of \emph{kernel} and \emph{rank} of a polynomial.

Also, two RM-codes $\cC$ and $\cC'$ are equivalent if and only if
there exist two invertible linearized polynomials  $h$ and $g$ and a field automorphism $\sigma$ such that
$\{h \circ f^\sigma \circ g \colon f\in \cC\}=\cC'$.

The notion of Delsarte dual code can be written in terms of
linearized polynomials as follows,
see for example \cite[Section 2]{LTZ}.
Let $b:\cL_{n,q}\times\cL_{n,q}\to\FF_q$ be the bilinear form
given by
\[ b(f,g)=\mathrm{Tr}_{q^n/q}\left( \sum_{i=0}^{n-1} f_ig_i \right) \]
where $\displaystyle f(x)=\sum_{i=0}^{n-1} f_i x^{[i]}$ and $\displaystyle g(x)=\sum_{i=0}^{n-1} g_i x^{[i]} \in \FF_{q^n}[x]$ and
we denote by $\mathrm{Tr}_{q^n/q}$  the trace function $\FF_{q^n}\to\FF_q$
defined as $\mathrm{Tr}_{q^n/q}(x)=x+x^{[1]}+\ldots+x^{[n-1]}$, for $x \in \FF_{q^n}$.
The Delsarte dual code $\cC^\perp$ of a set of linearized polynomials $\cC$ is
\[\cC^\perp = \{f \in \mathcal{L}_{n,q} \colon b(f,g)=0, \hspace{0.1cm}\forall g \in \cC\}. \]

Furthermore, the left and right idealisers of a code
$\cC\subseteq\cL_{n,q}$ can be written as
\[L(\cC)=\{\varphi(x) \in \mathcal{L}_{n,q} \colon \varphi \circ f \in \cC\, \text{for all} \, f \in \cC\};\]
\[R(\cC)=\{\varphi(x) \in \mathcal{L}_{n,q} \colon f \circ \varphi \in \cC\, \text{for all} \, f \in \cC\}.\]

\begin{definition}
  Suppose $\gcd(n,s)=1$ and
  let $\cG_{k,s}:=\langle x^{[0]},x^{[s]},\ldots, x^{[s(k-1)]}\rangle\leq
  \cL_{n,k}$.
  Any code equivalent to $\cG_{k,s}$ is called a
  \emph{generalized Gabidulin code}.
  Any code equivalent to $\cG_{k}:=\cG_{k,1}$ is
  called a \emph{Gabidulin code}.
\end{definition}
\begin{proposition}\cite[Theorem 5]{Sheekey}
  Suppose $\gcd(s,n)=1$ and
  let $\cH_{k,s}(\eta):=\langle x+\eta x^{[sk]}, x^{[s]},\ldots,
  x^{[s(k-1)]}\rangle$.
  If ${\mathrm N}(\eta)={\mathrm N}_{q^n/q}(\eta):=\prod_{i=0}^{n-1}\eta^{[i]}\neq (-1)^{nk}$,
  then $\cH_{k,s}(\eta)$ is a MRD-code with the same parameters as $\cG_{k,s}$.
\end{proposition}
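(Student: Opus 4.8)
The plan is to prove the stronger statement that every nonzero $f\in\cH_{k,s}(\eta)$ has rank at least $n-k+1$ as an element of $\Hom_q(\FF_{q^n})$; this suffices because the $k$ generators are $\FF_{q^n}$-linearly independent (their coefficients are supported on the $k+1$ positions $0,s,2s,\dots,s(k-1),sk$, which are pairwise distinct modulo $n$ since $\gcd(s,n)=1$ and $k\le n-1$), so $|\cH_{k,s}(\eta)|=q^{nk}$, and the Singleton bound $|\cC|\le q^{n(n-d+1)}$ then forces $d=n-k+1$, i.e.\ the code is MRD with the same parameters $[n,k]$ as $\cG_{k,s}$. One may assume $1\le k\le n-1$, the case $k=n$ being trivial. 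Throughout I write $\sigma\colon x\mapsto x^{[s]}$; since $\gcd(s,n)=1$ the automorphism $\sigma$ generates $\mathrm{Gal}(\FF_{q^n}/\FF_q)$, and a general element of $\cH_{k,s}(\eta)$ reads $f(x)=c_0x+c_1x^{[s]}+\dots+c_{k-1}x^{[s(k-1)]}+c_0\eta\,x^{[sk]}$, a ``$\sigma$-linearised polynomial of $\sigma$-degree at most $k$''.

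The first ingredient is the generalised Moore determinant: for $v_1,\dots,v_t\in\FF_{q^n}$ with $t\le n$, the matrix $\bigl(v_j^{[si]}\bigr)_{0\le i\le t-1,\,1\le j\le t}$ is nonsingular if and only if $v_1,\dots,v_t$ are $\FF_q$-linearly independent; here the hypothesis $\gcd(s,n)=1$ is essential. An immediate corollary is that a nonzero $\sigma$-linearised polynomial $\sum_{i=0}^{d}a_ix^{[si]}$ has at most $q^{d}$ roots in $\FF_{q^n}$: if it had $d+1$ linearly independent roots $v_1,\dots,v_{d+1}$, the matrix $\bigl(v_j^{[si]}\bigr)_{0\le i\le d,\,1\le j\le d+1}$ would be singular, having $(a_0,\dots,a_d)$ in its left kernel. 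Applying this with $d=k$ gives $\dim_{\FF_q}\ker f\le k$, i.e.\ $\rank f\ge n-k$, for every nonzero $f\in\cH_{k,s}(\eta)$; the remaining work is to exclude equality.

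Suppose then that some nonzero $f\in\cH_{k,s}(\eta)$ has $\dim_{\FF_q}\ker f=k$. First, necessarily $c_0\ne 0$ and $\eta\ne 0$: otherwise $f$ would be $\sigma$-linearised of $\sigma$-degree at most $k-1$ and have kernel of dimension at most $k-1$. Fix an $\FF_q$-basis $v_1,\dots,v_k$ of $V:=\ker f$ and form the $(k+1)\times k$ matrix $N=\bigl(v_j^{[si]}\bigr)_{0\le i\le k,\,1\le j\le k}$. By the generalised Moore determinant $N$ has rank $k$, so its left kernel is one-dimensional; the equations $f(v_j)=0$ show that the coefficient vector $(c_0,c_1,\dots,c_{k-1},c_0\eta)$ lies in it and, being nonzero, spans it. On the other hand, the left kernel of $N$ is also spanned by the vector of signed maximal minors (cofactor expansion along a fictitious extra column); its first entry is $\det\bigl(v_j^{[si]}\bigr)_{1\le i\le k,\,1\le j\le k}=\Delta^{q^{s}}$ — because that matrix is the entrywise $\sigma$-image of the Moore matrix of $v_1,\dots,v_k$ and $\sigma$ is a field automorphism — and its last entry is $(-1)^k\det\bigl(v_j^{[si]}\bigr)_{0\le i\le k-1,\,1\le j\le k}=(-1)^k\Delta$, where $\Delta:=\det\bigl(v_j^{[si]}\bigr)_{0\le i\le k-1,\,1\le j\le k}\ne 0$. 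Comparing the first and last components of the two proportional vectors yields $\eta=(-1)^k\Delta^{\,1-q^{s}}$. Taking norms, and using $\mathrm{N}_{q^n/q}(-1)=(-1)^n$ together with the fact that $\mathrm{N}_{q^n/q}(\Delta)\in\FF_q^{\ast}$ has $(q^{s}-1)$-th power equal to $1$ (since $q^s\equiv 1\pmod{q-1}$), we get $\mathrm{N}_{q^n/q}(\eta)=(-1)^{nk}$, contradicting the hypothesis. Hence $\dim_{\FF_q}\ker f\le k-1$ for every nonzero codeword, which completes the argument.

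The main obstacle is the generalised Moore determinant input (and, in particular, the observation that it fails without $\gcd(s,n)=1$): granted it, the rest is bookkeeping. The one delicate computational point is the identity $\det\bigl(v_j^{[si]}\bigr)_{1\le i\le k}=\Delta^{q^{s}}$ and the accompanying sign $(-1)^k$; once these are pinned down, the passage $\eta=(-1)^k\Delta^{1-q^s}\Rightarrow\mathrm{N}_{q^n/q}(\eta)=(-1)^{nk}$ is forced, and indeed shows that the excluded values of $\eta$ are exactly those for which the construction fails to be MRD.
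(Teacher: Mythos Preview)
Your argument is correct. The paper does not supply its own proof of this proposition: it is quoted verbatim as \cite[Theorem 5]{Sheekey} and used without further justification. What you have written is essentially Sheekey's original proof --- bound $\dim_{\FF_q}\ker f$ by the $\sigma$-degree via the generalised Moore determinant (valid because $\gcd(s,n)=1$), and then show that the borderline case $\dim_{\FF_q}\ker f=k$ forces $\mathrm{N}_{q^n/q}(\eta)=(-1)^{nk}$ by comparing the coefficient vector $(c_0,\dots,c_{k-1},c_0\eta)$ with the cofactor vector $(\Delta^{q^s},\dots,(-1)^k\Delta)$ of the $(k+1)\times k$ Moore-type matrix built from a basis of the kernel. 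The sign and norm bookkeeping you give is accurate, and the preliminary remark that the $k$ generators are $\FF_{q^n}$-linearly independent (so $|\cH_{k,s}(\eta)|=q^{nk}$) is exactly what is needed to conclude MRD from the minimum-distance bound.
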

\begin{definition}
Any code equivalent to $\cH_{k,s}(\eta)$ with ${\mathrm N}(\eta)\neq (-1)^{nk}$ and $\eta \neq 0$
is called a \emph{(generalized) twisted Gabidulin code}.
\end{definition}

\begin{remark}\label{k-2}
Clearly, if $1<k<n-1$
\[ \cH_{k,s}(\eta) \cap \cH_{k,s}(\eta)^{[s]}=\langle x^{[2s]},\ldots, x^{[s(k-1)]}\rangle  \]
and so $\dim(\cH_{k,s}(\eta) \cap \cH_{k,s}(\eta)^{[s]})=k-2$ if $\eta \neq 0$.
Indeed, $a_0(x+\eta x^{[sk]})+a_1 x^{[s]}+\ldots+a_{k-1}x^{[s(k-1)]} \in \langle x^{[s]}+\eta^{[s]} x^{[s(k+1)]}, x^{[2s]},\ldots, x^{[sk]}\rangle$ if and only if $a_0=a_1=0$.
\end{remark}

The two families of codes seen above are closed under the Delsarte duality.
\begin{lemma}\cite{Gabidulin,kshevetskiy_new_2005,LTZ,Sheekey}
  The Delsarte dual $\cC^{\perp}$ of an $\FF_{q^n}$-linear MRD-code
  $\cC$ of dimension $k$ is an $\FF_{q^n}$-linear MRD-code of dimension $n-k$.
  Also, $\cG_{k,s}^\perp$ is equivalent to $\cG_{n-k,s}$ and $\cH_{k,s}(\eta)^\perp$ is equivalent to $\cH_{n-k,s}(-\eta^{[n-ks]})$.
\end{lemma}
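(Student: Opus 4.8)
The plan is to prove the duality statement by combining the general fact (Lemma~\ref{dualMRD}) that the Delsarte dual of an $[n,k]$ MRD-code is an $[n,n-k]$ MRD-code with explicit computations of the duals of the two concrete families. For the MRD and dimension assertion there is nothing new to do beyond Lemma~\ref{dualMRD}, since $\cG_{k,s}$ and $\cH_{k,s}(\eta)$ are themselves MRD; so the substance is the identification $\cG_{k,s}^\perp\sim\cG_{n-k,s}$ and $\cH_{k,s}(\eta)^\perp\sim\cH_{n-k,s}(-\eta^{[n-ks]})$. First I would record the key elementary fact about the bilinear form $b$: for a monomial $x^{[i]}$ one has $b(a x^{[i]}, c x^{[j]})=\Tr_{q^n/q}(ac)$ if $i=j$ and $0$ otherwise, so the ``coordinate subspaces'' $\langle x^{[i]}: i\in S\rangle$ (with full $\FF_{q^n}$-freedom in each slot) pair up with $\langle x^{[i]}: i\in \{0,\dots,n-1\}\setminus S\rangle$ as honest perpendiculars; hence $b$ restricted to each coordinate line is a nondegenerate $\FF_q$-bilinear form via $\Tr_{q^n/q}$.

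For the Gabidulin case, $\cG_{k,s}=\langle x^{[0]},x^{[s]},\dots,x^{[s(k-1)]}\rangle$ uses precisely the exponents $S=\{0,s,2s,\dots,(k-1)s\}\pmod n$. By the orthogonality observation above, $\cG_{k,s}^\perp=\langle x^{[i]}: i\notin S\rangle$. This is \emph{not} literally $\cG_{n-k,s}$, but it is the span of $n-k$ monomials whose exponents, read cyclically, are the complement of an arithmetic progression of common difference $s$; concretely $\{sk, s(k+1),\dots, s(n-1)\}\pmod n$, which is again an arithmetic progression of difference $s$ of length $n-k$. Applying the equivalence $f(x)\mapsto f(x)\circ x^{[n-sk]}$ (pre-composition by the invertible monomial $x^{[n-sk]}$, which on monomials sends $x^{[i]}\mapsto x^{[(i+n-sk)\bmod n]}$ up to a Frobenius twist of the coefficient) shifts this progression back to $\{0,s,\dots,s(n-k-1)\}$, giving exactly $\cG_{n-k,s}$. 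One must check $\gcd(n,s)=1$ is preserved, which is immediate. This is essentially the computation indicated in \cite[Section~2]{LTZ}.

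For the twisted case, $\cH_{k,s}(\eta)=\langle x+\eta x^{[sk]},\, x^{[s]},\dots,x^{[s(k-1)]}\rangle$. I would compute $\cH_{k,s}(\eta)^\perp$ directly: a polynomial $g=\sum g_i x^{[i]}$ lies in the dual iff $b(g, x^{[js]})=0$ for $j=1,\dots,k-1$ and $b(g,\, x+\eta x^{[sk]})=0$. The first conditions kill the coordinates $g_s,\dots,g_{s(k-1)}$; the last gives the single $\FF_q$-linear constraint $\Tr_{q^n/q}(g_0+\eta g_{sk})=0$. Thus $\cH_{k,s}(\eta)^\perp$ is spanned by all $x^{[i]}$ with $i\notin\{0,s,\dots,s(k-1),sk\}$ together with one ``twisted'' generator supported on the two exponents $0$ and $sk$; writing it out, one finds it is $\langle x^{[0]} - \eta^{[\cdot]} x^{[sk]},\, x^{[s(k+1)]},\dots,x^{[s(n-1)]}\rangle$ up to the usual care with which Frobenius power the coefficient $\eta$ acquires from the normalization of the form. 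Re-indexing via the same monomial precomposition $x^{[n-sk]}$ that worked in the Gabidulin case turns this into a code of the shape $\langle x+\eta' x^{[s(n-k)]}, x^{[s]},\dots,x^{[s(n-k-1)]}\rangle=\cH_{n-k,s}(\eta')$, and tracking the coefficient through the Frobenius twist yields $\eta'=-\eta^{[n-ks]}$. The main obstacle, and the step I would be most careful about, is exactly this bookkeeping: getting the correct Frobenius exponent on $\eta$ and the correct sign, since $b$ is only $\FF_q$-bilinear (not $\FF_{q^n}$-bilinear) and the precomposition $f\mapsto f\circ x^{[t]}$ twists coefficients by $[t]$; I would double-check the sign/exponent against the known boundary cases $k=1$ and $k=n-1$ and against Remark~\ref{k-2}, and verify the norm condition $\N(\eta')\neq(-1)^{n(n-k)}$ follows from $\N(\eta)\neq(-1)^{nk}$, so that the dual is again a genuine twisted Gabidulin code rather than a degenerate one.
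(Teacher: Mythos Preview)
The paper does not give its own proof of this lemma: it is stated with citations to \cite{Gabidulin,kshevetskiy_new_2005,LTZ,Sheekey} and no argument is supplied. Your sketch is essentially the standard computation one finds in those references, in particular in \cite[Section~2]{LTZ} and \cite{Sheekey}, so in that sense you are reproducing the intended proof rather than departing from it.

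Two points are worth tightening. First, your description of the orthogonality condition on the twisted generator is slightly off: since $\cH_{k,s}(\eta)$ is $\FF_{q^n}$-linear, one must have $b(g,\,a(x+\eta x^{[sk]}))=\Tr_{q^n/q}\bigl(a(g_0+\eta g_{sk})\bigr)=0$ for \emph{all} $a\in\FF_{q^n}$, which forces the $\FF_{q^n}$-linear relation $g_0+\eta g_{sk}=0$, not merely a single $\FF_q$-linear trace constraint. You then use the correct conclusion, so this is only a wording issue. Second, the equivalence you need is left composition $f\mapsto x^{[n-sk]}\circ f$, not the right composition $f\mapsto f\circ x^{[n-sk]}$ you wrote: right composition sends $a x^{[i]}$ to $a x^{[i+n-sk]}$ with \emph{no} Frobenius twist on the coefficient, whereas left composition sends it to $a^{[n-sk]}x^{[i+n-sk]}$. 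It is precisely this twist that turns the dual generator $x^{[sk]}-\eta x$ into $x-\eta^{[n-sk]}x^{[s(n-k)]}$ and produces the parameter $-\eta^{[n-ks]}$. With that correction the bookkeeping you flagged as the ``main obstacle'' goes through cleanly, and the norm check $\N(-\eta^{[n-ks]})=(-1)^n\N(\eta)\neq(-1)^{n(n-k)}$ follows from $\N(\eta)\neq(-1)^{nk}$ by a parity comparison.
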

Apart from the two infinite families of $\FF_{q^n}$-linear MRD-codes $\cG_{k,s}$ and $\cH_{k,s}(\eta)$, there are a few other examples known for $n \in \{6,7,8\}$. Such examples are listed in Table~\ref{kMRD} and
their Delsarte duals in Table~\ref{DkMRD}.

\begin{table}[htp]
\[
  \begin{array}{ |c|c|c|c| }
\hline
\cC & \mbox{parameters} & \mbox{conditions} & \mbox{reference} \\ \hline
\cC_1=\langle x,\delta x^{[1]}+x^{[4]} \rangle_{\FF_{q^6}} & (6,6,q;5) & \begin{array}{cc} q>4 \\ \text{certain choices of} \, \delta \end{array} &  \mbox{\cite[Theorem 7.1]{CsMPZa}} \\ \hline
\cC_2=\langle x, x^{[1]}+x^{[3]}+\delta x^{[5]} \rangle_{\FF_{q^6}} & (6,6,q;5) & \begin{array}{cccc}q \hspace{0.1cm} \text{odd} \\ q \equiv 0,\pm 1 \pmod{5} \\ \delta^2+\delta =1 \\ (\delta \in \FF_q) \end{array} & \mbox{\cite[Theorem 5.1]{CsMPZ}} \\ \hline
\cC_3=\langle x,x^{[s]}, x^{[3s]} \rangle_{\FF_{q^7}} & (7,7,q;5) & \begin{array}{cc} q\,\text{odd}\\ \gcd(s,7)=1 \end{array} &  \mbox{\cite[Theorem 3.3]{CsMPZh}} \\ \hline
\cC_4=\langle x,\delta x^{[1]}+x^{[{5}]} \rangle_{\FF_{q^8}} & (8,8,q;7) & \begin{array}{cc} q\,\text{odd}\\ \delta^2=-1 \end{array} & \mbox{\cite[Theorem 7.2]{CsMPZa}} \\ \hline
\cC_5=\langle x,x^{[s]}, x^{[3s]} \rangle_{\FF_{q^8}} & (8,8,q;6) & \begin{array}{cc} q \equiv 1 \pmod{3} \\ \gcd(s,8)=1 \end{array} & \mbox{\cite[Theorem 3.5]{CsMPZh}} \\ \hline
  \end{array}
\]
\caption{Linear MRD-codes in low dimension}
\label{kMRD}
\end{table}
\begin{table}[htp]
  \[
    \begin{array}{|c|c|c|}
      \hline
\cD_i=\cC_i^{\perp} & \mbox{parameters} & \mbox{conditions}  \\ \hline
    \cD_1=\langle x^{[1]}, x^{[{2}]}, x^{[{4}]},x-\delta^{[{5}]} x^{[{3}]} \rangle_{\FF_{q^6}} & (6,6,q;3) & \begin{array}{cc} q>4 \\ \text{certain choices of} \, \delta \end{array} \\ \hline
\cD_2=\langle x^{[1]},x^{[3]},x-x^{[2]},x^{[4]}-\delta x \rangle_{\FF_{q^6}} & (6,6,q;3) & \begin{array}{cccc}q \hspace{0.1cm} \text{odd} \\ q \equiv 0,\pm 1 \pmod{5} \\ \delta^2+\delta =1\\ (\delta \in \FF_q) \end{array} \\ \hline
\cD_3=\langle x,x^{[{2s}]},x^{[{3s}]},x^{[{4s}]} \rangle_{\FF_{q^7}} & (7,7,q;4) & \begin{array}{cc} q\,\text{odd}\\ \gcd(s,7)=1 \end{array} \\ \hline
\cD_4=\langle x^{[1]},x^{[2]},x^{[3]},x^{[5]},x^{[6]},x-\delta x^{[4]} \rangle_{\FF_{q^8}} & (8,8,q;3) & \begin{array}{cc} q\,\text{odd}\\ \delta^2=-1 \end{array} \\ \hline
\cD_5=\langle x,x^{[{2s}]},x^{[{3s}]},x^{[{4s}]},x^{[{5s}]} \rangle_{\FF_{q^8}} & (8,8,q;4) & \begin{array}{cc} q \equiv 1 \pmod{3} \\ \gcd(s,8)=1 \end{array}
    \\ \hline
  \end{array}
\]
\caption{Delsarte duals of the codes $\cC_i$ for $i=1,\ldots,5$}
\label{DkMRD}
\end{table}

\subsection{Linear RM-codes as subspaces of $\FF_{q^n}^n$}\label{vecnum}

In \cite{Gabidulin}, Gabidulin studied RM-codes as subsets of $\FF_{q^n}^n$. This view is still used in \cite{BR,H-TM,H-TNRR,Neri}.
As noted before, $\mathcal{L}_{n,q}$ equipped with the classical sum and the scalar multiplication by elements in $\FF_{q^n}$ is an $\FF_{q^n}$-vector space.
Let $\mathcal{B}=(g_1,\ldots,g_n)$ an ordered $\FF_q$-basis of $\FF_{q^n}$. The
evaluation mapping
\[ \Phi_{\mathcal{B}}: f(x) \in \mathcal{L}_{n,q} \mapsto (f(g_1),\ldots,f(g_n)) \in \FF_{q^n}^n \]
is an isomorphism between the $\FF_{q^n}$-vector spaces $\mathcal{L}_{n,q}$ and $\FF_{q^n}^n$.
Therefore, if $W$ is an $\FF_{q^n}$-subspace of $\mathcal{L}_{n,q}$, a generator matrix $G$ of $\Phi_{\mathcal{B}}(W)$ can be constructed using the images of a basis of $W$ under the action of $\Phi_{\mathcal{B}}$.
Also, if $G$ is a generator matrix of $\Phi_{\mathcal{B}}(W)$ of maximum rank, then an $\FF_{q^n}$-basis for $W$ can be defined by using the application $\Phi_{\mathcal{B}}^{-1}$ on the rows of $G$.

\section{Characterization of generalized twisted Gabidulin codes}
\label{charact}

A. Horlemann-Trautmann et al. in \cite{H-TM}
proved the following characterization of
generalized Gabidulin codes.
\begin{theorem}[\cite{H-TM}]
  \label{gabidulin-d}
  A MRD-code $\cC$ over $\FF_q$ of length $n$ and dimension $k$ is equivalent
  to a generalized Gabidulin code $\cG_{k,s}$ if and only if there is
  an integer $s<n$ with $\gcd(s,n)=1$ and
  $\dim (\cC\cap\cC^{[s]})=k-1$, where $\cC^{[s]}=\{f(x)^{[s]} \colon f(x)\in \cC\}$.
\end{theorem}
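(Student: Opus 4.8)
The plan is to work entirely in the linearized-polynomial model, so that both $\cC$ and $\cG_{k,s}$ are $\FF_{q^n}$-subspaces of $\cL_{n,q}$, and to write $\phi_s(x)=x^{[s]}$, so that $\cC^{[s]}=\phi_s\circ\cC$; the two mechanical facts used repeatedly are $(f\circ g)^{[s]}=f^{[s]}\circ g$ and that the powers $\phi_0,\phi_1,\dots,\phi_{n-1}$ are $\FF_{q^n}$-independent in $\cL_{n,q}$. We may assume $1\le k\le n-1$ (for $k=n$ the only code is $\cL_{n,q}$ and the statement is read with the usual convention). The engine of the whole argument will be one observation: a nonzero $\FF_{q^n}$-subspace $V\le\cL_{n,q}$ with $V^{[1]}=V$ contains an element of rank $1$. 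To see this I would note that $f\mapsto f^{[1]}$ is $\FF_q$-linear and Frobenius-semilinear over $\FF_{q^n}$ with $n$-th power the identity, that its fixed space is exactly $\{\mathrm{Tr}_{q^n/q}(ax)\colon a\in\FF_{q^n}\}$, and that by Galois descent $V$ is spanned over $\FF_{q^n}$ by the $\FF_q$-space $V\cap\{\mathrm{Tr}_{q^n/q}(ax)\}$; a nonzero $\mathrm{Tr}_{q^n/q}(ax)$ is an $\FF_q$-linear surjection onto $\FF_q$, hence of rank $1$. Since an MRD-code of dimension $<n$ has minimum distance $\ge 2$, it contains no rank-$1$ element; together with Lemma~\ref{dualMRD} (so $\cC^\perp$ is again MRD of dimension $n-k$) and the invariance of the form $b$ under $f\mapsto f^{[1]}$ (whence $(\cC^{[s]})^\perp=(\cC^\perp)^{[s]}$ and $(V^{[1]})^\perp=(V^\perp)^{[1]}$), this gives a ready source of contradictions.

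\emph{Necessity.} If $\cC$ is equivalent to $\cG_{k,s}$, then since both are $\FF_{q^n}$-linear their left idealisers are the field of scalar maps $\{cx\colon c\in\FF_{q^n}\}$, and an equivalence $\cC=h\circ\cG_{k,s}\circ g$ conjugates one copy onto the other (Proposition~\ref{idealis}); hence $h$ normalises $\{cx\}$ inside $\cL_{n,q}$, and a short check (an invertible additive map that scales multiplicatively is, up to a constant, a field automorphism) shows $h$ is a monomial $d\,x^{[t]}$. Absorbing scalars into the $\FF_{q^n}$-span this puts $\cC$ in the normal form $\cC=\langle\tilde g,\tilde g^{[s]},\dots,\tilde g^{[s(k-1)]}\rangle$ with $\tilde g=g^{[t]}$ invertible. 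Then $\cC^{[s]}=\langle\tilde g^{[s]},\dots,\tilde g^{[sk]}\rangle$, so $\langle\tilde g^{[s]},\dots,\tilde g^{[s(k-1)]}\rangle$ sits in $\cC\cap\cC^{[s]}$ and has dimension $k-1$ (a relation $\sum_l\mu_l\tilde g^{[sl]}=(\sum_l\mu_l\phi_{sl})\circ\tilde g=0$ forces $\sum_l\mu_l\phi_{sl}=0$, impossible), while $\cC\ne\cC^{[s]}$, for otherwise $\cC$ would be $[1]$-invariant and carry a rank-$1$ element. Thus $\dim(\cC\cap\cC^{[s]})=k-1$.

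\emph{Sufficiency.} Assume $\dim(\cC\cap\cC^{[s]})=k-1$ with $\gcd(s,n)=1$. I would first prove a \emph{chain estimate}: for any $\FF_{q^n}$-linear MRD-code $\cD$ of dimension $k'$, $1\le k'\le n-1$, with $\dim(\cD\cap\cD^{[s]})=k'-1$, one has $\dim(\cD+\cD^{[s]}+\dots+\cD^{[sj]})=k'+j$ for $0\le j\le n-k'$. Indeed, a premature stabilisation would produce a proper subspace $U\supseteq\cD$ with $U^{[s]}\subseteq U$, hence $U^{[1]}=U$ since $\gcd(s,n)=1$; then $U^\perp$ is a nonzero $[1]$-invariant subspace of the MRD-code $\cD^\perp$, forcing a rank-$1$ codeword there, which is impossible. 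Applying this to $\cC^\perp$ and dualising gives $\dim\!\big(\bigcap_{j=0}^m\cC^{[sj]}\big)=k-m$ for $0\le m\le k$; in particular $\bigcap_{j=0}^{k-1}\cC^{[sj]}=\langle g\rangle$ is one-dimensional, so $g^{[-sj]}\in\cC$ for $j=0,\dots,k-1$. Next I would show these $k$ polynomials span $\cC$: if $m\le k$ is maximal with $g,g^{[-s]},\dots,g^{[-s(m-1)]}$ independent and $m<k$, then $g^{[-sm]}$, and hence (applying $[-s]$ repeatedly) every $g^{[-sl]}$, lies in $\cC':=\langle g,\dots,g^{[-s(m-1)]}\rangle$, so $\cC'$ is a nonzero $[s]$-invariant, hence $[1]$-invariant, subcode of $\cC$ -- again impossible by the rank-$1$ dichotomy. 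Finally, $g$ is invertible: by Corollary~\ref{weight}, $\cC$ contains a rank-$n$ codeword $C=\sum_j\lambda_jg^{[-sj]}=P\circ g$, and $\rank(P\circ g)=n$ forces $\rank(g)=n$. To conclude, fix an $\FF_q$-basis $\mathcal B=(b_1,\dots,b_n)$ of $\FF_{q^n}$; passing to the vector model by the rank isometry $\Phi_{\mathcal B}$, the code $\cC$ has generator matrix $\big(g(b_i)^{q^{-sj}}\big)_{0\le j\le k-1}$, and since $g$ is bijective the $g(b_i)$ form a basis, so this is precisely the Moore-type generator matrix of $\cG_{k,s}$ written in the basis $\big(g(b_1)^{[-s(k-1)]},\dots,g(b_n)^{[-s(k-1)]}\big)$; as a change of $\FF_q$-basis of $\FF_{q^n}$ induces a code equivalence, $\cC$ is equivalent to $\cG_{k,s}$.

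I expect the sufficiency direction to be the main obstacle, and within it the two ``no conjugation-invariant subcode'' steps (the chain estimate and the spanning statement). These are exactly where the MRD hypothesis is genuinely consumed, always through the same mechanism: a $[1]$-invariant subspace inside $\cC$ or inside $\cC^\perp$ would contain a rank-$1$ element, which an MRD-code of distance $\ge2$ cannot have. Pinning down the rank-$1$ dichotomy for $[1]$-invariant subspaces and keeping the interplay of $\perp$, the conjugation operators $[\,\cdot\,]$ and the dimension counts consistent is the delicate part; the reconstruction of the Moore matrix and the necessity direction are then comparatively routine.
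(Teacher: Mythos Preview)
The paper does not prove this theorem directly---it is quoted from \cite{H-TM}---but it effectively reproves the sufficiency direction as Lemma~\ref{Gablemma}, via a short induction on $k$: writing $V=\cC\cap\cC^{[s]}$, one checks that $V\neq V^{[s]}$ (by Lemma~\ref{fixedspace}) so $\dim(V\cap V^{[s]})=k-2$, applies the inductive hypothesis to $V$ to obtain $V=\langle h,h^{[s]},\dots,h^{[s(k-2)]}\rangle$, and then notes $h^{[-s]}\in\cC\setminus V$ completes a basis. Your route is genuinely different: rather than induct, you run a filtration argument, aiming at $\dim\bigcap_{j=0}^{m}\cC^{[sj]}=k-m$ for all $m\le k$ via duality and a ``no premature stabilisation'' principle, and then extract the generator $g$ from the one-dimensional intersection at $m=k-1$. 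Both arguments rest on the same engine---your rank-$1$ dichotomy for $[1]$-invariant subspaces is precisely Lemma~\ref{fixedspace}---and both finish by showing the generator is invertible and peeling it off on the right. The paper's induction is shorter and never touches $\cC^\perp$; your filtration is more structural and makes the role of the MRD hypothesis more visible.

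Two points to tighten. First, in your chain estimate the ``premature stabilisation'' argument only yields the lower bound $\dim(W_0+\dots+W_j)\ge k'+j$; for the matching upper bound you also need $W_{j-1}^{[s]}\subseteq W_j\cap W_j^{[s]}$, which forces each jump to be at most one. (Indeed, you can bypass duality entirely by running the same two-sided estimate directly on the descending chain $I_m=\bigcap_{j\le m}\cC^{[sj]}$: one has $I_m+I_m^{[s]}\subseteq I_{m-1}^{[s]}$, giving $\dim I_{m+1}\ge 2\dim I_m-\dim I_{m-1}$, while Lemma~\ref{fixedspace} forbids $I_{m+1}=I_m\neq0$.) Second, in the necessity direction your claim that $L(\cC)=\{cx\colon c\in\FF_{q^n}\}$ does not follow from $\FF_{q^n}$-linearity alone---that gives only the inclusion $\{cx\}\subseteq L(\cC)$; equality uses the MRD bound $|L(\cC)|\le q^n$. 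Finally, the Moore-matrix detour at the end is correct but unnecessary: once $g$ is invertible, $\cC\circ g^{-1}=\langle x,x^{[-s]},\dots,x^{[-s(k-1)]}\rangle=\cG_{k,n-s}$ directly, exactly as in the last line of the proof of Lemma~\ref{Gablemma}.
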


If $\cC$ is equivalent to a generalized twisted Gabidulin code $\cH_{k,s}(\eta)$, then $\dim (\cC\cap\cC^{[s]})=k-2$.
This condition, in general, is not
enough to characterize MRD-codes equivalent to $\cH_{k,s}(\eta)$.
The present section is devoted to determine what further
conditions are necessary for a characterization.

Denote by $\tau_\alpha$ the linear application defined by $\tau_\alpha(x)=\alpha x$ and denote by $U_1=\{\Tr\circ\tau_{\alpha} \colon \alpha \in \FF_{q^n}\}=\{\alpha x+ \alpha^{[1]}x^{[1]}+\cdots+\alpha^{[n-1]}x^{[n-1]} \colon \alpha \in \FF_{q^n}\}$. The set $U_1$ is an $\FF_q$-subspace of $\mathcal{L}_{n,q}$ of dimension $n$ whose elements have rank at most one.
It can be proven that the set $\mathcal{U}_1$ of all linearized polynomials with rank at most one is
\[\mathcal{U}_1=\bigcup_{\beta \in \FF_{q^n}^*} \tau_\beta \circ U_1 = \{\tau_\beta \circ \Tr \circ \tau_{\alpha} \colon \alpha,\beta \in \FF_{q^n}\},\]
see e.g. \cite[Proposition 5.1]{LuMaPoTr2014}.

\begin{lemma}
\label{l-basis}
  The space $\cL_{n,q}$ admits a basis of elements contained in
  $U_1$.
\end{lemma}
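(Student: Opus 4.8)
The plan is to exhibit an explicit spanning set of $n^2$ elements of $U_1$ and show they are $\FF_q$-linearly independent, so that they form a basis of $\cL_{n,q}$ (which has dimension $n^2$ over $\FF_q$). The natural candidates are obtained by running $\alpha$ over a basis of $\FF_{q^n}/\FF_q$ and then scaling: concretely, fix an $\FF_q$-basis $\omega_0,\dots,\omega_{n-1}$ of $\FF_{q^n}$ and consider the polynomials $f_{j}(x) = \Tr\circ\tau_{\omega_j} = \sum_{i=0}^{n-1}\omega_j^{[i]} x^{[i]}$ for $j=0,\dots,n-1$. These $n$ polynomials already span $U_1$, but $U_1$ has $\FF_q$-dimension only $n$, so I need more elements of $U_1$ than just those inside the subspace $U_1$ itself — wait, $U_1$ is a fixed $n$-dimensional subspace, so it cannot contain a basis of the $n^2$-dimensional space. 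So the correct reading of the Lemma is that $\cL_{n,q}$ has a basis consisting of elements each of which lies in $\bigcup$ of the various rank-$\le 1$ spaces, i.e. in $\mathcal U_1$ — more precisely, a basis whose members are of the form $\tau_\beta\circ\Tr\circ\tau_\alpha$. So I will work with the family $\{\,\tau_{\omega_k}\circ\Tr\circ\tau_{\omega_j} : 0\le j,k\le n-1\,\}$, which has $n^2$ members, each of rank $\le 1$ hence in $\mathcal U_1$.

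First I would compute the coefficient vector of $g_{j,k}(x) := \tau_{\omega_k}\circ(\Tr\circ\tau_{\omega_j})(x) = \omega_k\sum_{i=0}^{n-1}\omega_j^{[i]} x^{[i]} = \sum_{i=0}^{n-1}(\omega_k\,\omega_j^{[i]})\,x^{[i]}$. Arranging the $n^2$ coefficients of an element of $\cL_{n,q}$ as an $n\times n$ array indexed by $(i,\ell)$ via $x^{[i]}$-coefficient expanded in the basis $\{\omega_\ell\}$ would work, but it is cleaner to argue directly: the map sending $g\in\cL_{n,q}$ to the matrix of $\FF_{q^n}$-coefficients $(c_i)$ is an $\FF_{q^n}$-linear iso $\cL_{n,q}\to\FF_{q^n}^n$, and under it $g_{j,k}\mapsto (\omega_k\omega_0^{[0]},\omega_k\omega_0^{[1]},\dots)$... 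Actually the clean statement is: for fixed $i$, the $n$ coefficients $\{\omega_j^{[i]} : j\}$ form an $\FF_q$-basis of $\FF_{q^n}$ (since $\omega\mapsto\omega^{[i]}$ is an $\FF_q$-automorphism), so already $\{g_{j,k}\}_{j}$ for each fixed $i$... The key step is a tensor/Vandermonde argument: think of $\cL_{n,q}\cong \FF_{q^n}\otimes_{\FF_q}(\text{coeff space})$, and the matrix $M=(\omega_j^{[i]})_{i,j}$ is invertible over $\FF_{q^n}$ (it is the Moore/Vandermonde-type matrix of a normal-ish basis; more safely, just pick the $\omega_j$ so that $M$ is the identity-adjacent Moore matrix of a basis, which is invertible since the $\omega_j$ are $\FF_q$-independent — this is the standard fact that the matrix $(\omega_j^{q^i})$ is nonsingular iff $\omega_0,\dots,\omega_{n-1}$ are $\FF_q$-linearly independent).

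The main obstacle, and the heart of the proof, is the linear independence over $\FF_q$ of the full family $\{g_{j,k}\}_{j,k}$. I would handle it as follows: suppose $\sum_{j,k} c_{jk}\,g_{j,k}=0$ with $c_{jk}\in\FF_q$. Collecting the coefficient of $x^{[i]}$ gives $\sum_{j,k} c_{jk}\,\omega_k\,\omega_j^{[i]}=0$ in $\FF_{q^n}$, for every $i=0,\dots,n-1$. Writing $b_j := \sum_k c_{jk}\omega_k \in \FF_{q^n}$, this reads $\sum_j b_j\,\omega_j^{[i]} = 0$ for all $i$, i.e. $M^{\mathsf T}\mathbf b = \mathbf 0$ where $M=(\omega_j^{[i]})$ is the nonsingular Moore matrix above; hence $b_j=0$ for all $j$. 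But $b_j=\sum_k c_{jk}\omega_k$ and the $\omega_k$ are $\FF_q$-independent, so all $c_{jk}=0$. This proves independence; combined with the cardinality count $n^2=\dim_{\FF_q}\cL_{n,q}$ it gives that $\{g_{j,k}\}$ is a basis, and every $g_{j,k}=\tau_{\omega_k}\circ\Tr\circ\tau_{\omega_j}$ lies in $\mathcal U_1$, so in particular each has rank at most one; this is the required basis. I would then remark that this is exactly the statement needed later, namely that $\cL_{n,q}$ is spanned by rank-one linearized polynomials.
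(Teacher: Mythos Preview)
You have misread the statement. The lemma concerns $\cL_{n,q}$ as an $\FF_{q^n}$-vector space (of dimension $n$), not as an $\FF_q$-space, and it really does mean $U_1$, not $\mathcal U_1$. Your objection that ``$U_1$ has $\FF_q$-dimension only $n$, so it cannot contain a basis of the $n^2$-dimensional space'' evaporates once you realise that only $n$ vectors are needed: the claim is that one can pick $n$ elements of $U_1$ that are $\FF_{q^n}$-linearly independent. The paper does exactly this: take an $\FF_q$-basis $\alpha_1,\dots,\alpha_n$ of $\FF_{q^n}$, set $\overline{\alpha_i}=\Tr\circ\tau_{\alpha_i}\in U_1$, and observe that their coordinate matrix with respect to $(x,x^{[1]},\dots,x^{[n-1]})$ is the Moore matrix $(\alpha_i^{[j]})$, which is nonsingular.

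The distinction is not cosmetic. The sole purpose of this lemma is its use in the next one (on subspaces $W$ with $W=W^{[s]}$), where the crucial property of the basis vectors $b$ is that $b^{[s]}=b$. Elements of $U_1$ satisfy this identically, since $(\Tr\circ\tau_\alpha)^{[s]}=\Tr\circ\tau_\alpha$; but a general element $\tau_\beta\circ\Tr\circ\tau_\alpha\in\mathcal U_1$ satisfies $(\tau_\beta\circ\Tr\circ\tau_\alpha)^{[s]}=\tau_{\beta^{[s]}}\circ\Tr\circ\tau_\alpha$, which equals the original only when $\beta\in\FF_q$. Your basis $\{g_{j,k}=\tau_{\omega_k}\circ\Tr\circ\tau_{\omega_j}\}$ therefore does not have the needed invariance, and the downstream argument would fail with it. Ironically, the paper's proof is already hiding inside yours: restrict to the single value of $k$ for which $\omega_k\in\FF_q$ (say $\omega_k=1$), and your Moore-matrix computation shows that the $n$ polynomials $g_{j,k}=\Tr\circ\tau_{\omega_j}\in U_1$ are $\FF_{q^n}$-independent, which is precisely the lemma.
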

\begin{proof}
  Let
  $(\alpha_1,\alpha_2,\cdots,\alpha_n)$ be
  a basis of $\FF_{q^n}$ over $\FF_q$.
  Define
  $\overline{\alpha_i}:=\Tr\circ\tau_{\alpha_i}$ for $i=1,\dots,n$
  and $\fB:=\{\overline{\alpha_i}\colon i=1,\dots,n\}$.
  Consider the basis $B_0=(x,x^{[1]},\dots,x^{[n-1]})$ of $\cL_{n,q}$;
  the components of the vectors of $\fB$ with respect to this
  basis are the rows of the following matrix
  \[ M:=\begin{pmatrix}
      \alpha_1 & \alpha_1^{[1]} & \dots & \alpha_1^{[n-1]} \\
      \alpha_2 & \alpha_2^{[1]} & \dots & \alpha_2^{[n-1]} \\
      \vdots    &                &       & \vdots \\
      \alpha_n & \alpha_n^{[1]} & \dots & \alpha_n^{[n-1]} \\
    \end{pmatrix} \]
  By ~\cite[Corollary 2.38]{lidl_finite_1997}, $\det(M)\neq 0$;
  in particular the vectors of $\fB$ are linearly independent
  in $\cL_{n,q}$ and so $\fB$ is a basis for $\cL_{n,q}$.
\end{proof}

\begin{lemma}\label{trace}
Let $n$ and $s$ be two integers such that $\gcd(s,n)=1$, if $p(x) \in \cL_{n,q}$ and $p(x)=\lambda p(x)^{[s]}$ for some $\lambda \in \FF_{q^n}^*$, then $p(x)$ is in $\mathcal{U}_1$.
\end{lemma}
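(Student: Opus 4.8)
Write $p(x) = \sum_{i=0}^{n-1} p_i x^{[i]}$. The hypothesis $p(x) = \lambda p(x)^{[s]}$ means that for each index $i$ (read modulo $n$), the coefficient of $x^{[i]}$ satisfies $p_i = \lambda\, p_{i-s}^{[s]}$. The plan is to iterate this recursion around the single cycle that the map $i \mapsto i+s$ induces on $\mathbb Z/n\mathbb Z$ (it is a single $n$-cycle precisely because $\gcd(s,n)=1$), derive a closed form for all the $p_i$ in terms of a single coefficient, and recognise the resulting polynomial as lying in $\mathcal U_1 = \{\tau_\beta\circ\Tr\circ\tau_\alpha\}$.

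Concretely, if $p(x)$ is not identically zero, pick some index with $p_j \neq 0$; the recursion $p_i = \lambda\, p_{i-s}^{[s]}$ then forces \emph{every} coefficient to be nonzero (running the cycle backwards, a zero coefficient would propagate to all of them). Fix the normalisation $p_0 = \alpha$ for a suitable $\alpha \in \FF_{q^n}^*$; unwinding the recursion $k$ steps gives $p_{ks \bmod n} = \big(\textstyle\prod_{t=0}^{k-1}\lambda^{[ts]}\big)\,\alpha^{[ks]}$ for $k = 0, 1, \ldots, n-1$, which after reindexing (using that $\{ks \bmod n : 0\le k\le n-1\} = \{0,1,\dots,n-1\}$) expresses $p_i$ as $\mu_i\,\alpha^{[i]}$ for explicit scalars $\mu_i \in \FF_{q^n}^*$ built from $\lambda$ and its conjugates. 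Running the cycle all the way around ($k=n$) returns to $p_0$ and yields the consistency constraint $\Nr_{q^n/q}(\lambda)\,\alpha^{[n]} = \alpha$, i.e. $\Nr_{q^n/q}(\lambda) = 1$ — so such a $\lambda$ exists and by Hilbert 90 can be written $\lambda = \gamma/\gamma^{[s]}$ for some $\gamma \in \FF_{q^n}^*$. Substituting $\lambda = \gamma^{[-s]}\cdot$(...) and telescoping the product $\prod_{t=0}^{k-1}\lambda^{[ts]}$ collapses $\mu_i$ to the clean form $\mu_i = \gamma/\gamma^{[i]}$ (up to a global constant), so that $p_i = c\,\gamma^{-1}\,(\gamma\alpha)^{[i]}$ for a fixed $c$; hence
\[
p(x) = c\,\gamma^{-1}\sum_{i=0}^{n-1} (\gamma\alpha)^{[i]} x^{[i]} = \tau_{c\gamma^{-1}}\circ \Tr \circ \tau_{\gamma\alpha}(x),
\]
which is manifestly an element of $\mathcal U_1$ by the description recalled before Lemma~\ref{l-basis}.

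The main obstacle — and the step deserving care — is the bookkeeping of the product of conjugates of $\lambda$ along the $s$-cycle and verifying that it really telescopes to $\gamma/\gamma^{[i]}$; this is where $\gcd(s,n)=1$ is used a second time, guaranteeing that as $k$ ranges over $0,\dots,n-1$ the exponents $ts \bmod n$ sweep out each residue exactly once, so that the Hilbert-90 substitution $\lambda=\gamma^{[-s]}\gamma^{[?]}\cdots$ produces a genuinely telescoping product rather than a tangled one. An alternative, slightly slicker route that sidesteps some of this: apply the hypothesis $n/\gcd$-many times to get $p = \Nr_{q^n/q}(\lambda)^{?}\,p^{[ns]}$; but since $[ns]$ acts trivially this just recovers the norm condition, and one still needs the cyclic unwinding above to pin down the shape of $p$. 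So I would present the direct telescoping argument, being explicit that $p\equiv 0$ is the trivial case (contained in $\mathcal U_1$ vacuously) and that otherwise all coefficients are nonzero, which is what makes the Hilbert-90 normalisation legitimate.
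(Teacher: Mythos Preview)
Your approach is correct and takes a genuinely different route from the paper. You unwind the coefficient recursion $p_i=\lambda\,p_{i-s}^{[s]}$ explicitly around the single $s$-cycle on $\mathbb Z/n\mathbb Z$, deduce $\Nr_{q^n/q}(\lambda)=1$, invoke Hilbert~90 to write $\lambda$ as a ratio of conjugates, telescope the product, and exhibit $p(x)$ concretely in the form $\tau_{\beta}\circ\Tr\circ\tau_{\alpha}$. (One cosmetic slip: with $\lambda=\gamma/\gamma^{[s]}$ the telescoping actually gives $p_i=\gamma\,(\alpha\gamma^{-1})^{[i]}$, hence $p=\tau_{\gamma}\circ\Tr\circ\tau_{\alpha\gamma^{-1}}$ rather than $\tau_{c\gamma^{-1}}\circ\Tr\circ\tau_{\gamma\alpha}$; the conclusion $p\in\mathcal U_1$ is unaffected.) The paper's argument is much shorter and avoids Hilbert~90 and all bookkeeping: it simply iterates the hypothesis to get $p=\lambda_i\,p^{[i]}$ for every $i\in\{0,\dots,n-1\}$ (since $x\mapsto x^{[s]}$ generates $\mathrm{Gal}(\FF_{q^n}/\FF_q)$), and then observes that the rows of the Dickson matrix $D_p$ are exactly the coefficient vectors of $p,p^{[1]},\dots,p^{[n-1]}$, so they are pairwise proportional and $\mathrm{rank}\,D_p\le 1$. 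Your route is more constructive (it actually produces the $\alpha,\beta$ witnessing membership in $\mathcal U_1$), while the paper's route trades that explicitness for brevity by leaning on the rank characterisation via the Dickson matrix already recalled in Section~\ref{qpoly}.
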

\begin{proof}
  Under the assumptions, the map $x\to x^{[s]}$ is a generator of
  the Galois group of $\FF_{q^n}:\FF_q$. In particular, for all $0\leq i\leq n-1$
  there are $\lambda_i$ such that
  $p(x)=\lambda_i p(x)^{[i]}$. It follows that the Dickson matrix of
  $p(x)$ has rank at most $1$ and this proves the thesis.
\end{proof}

For the sake of completeness we prove the following lemma; see also~\cite[Lemma 3]{Lun99}.

\begin{lemma}\label{fixedspace}
  Let $n$ and $s$ be two integers such that $\gcd(s,n)=1$, if $W \neq \{0\}$ is an $\FF_{q^n}$-subspace of $\mathcal{L}_{n,q}$ such that $W= W^{[s]}$, then $W$ admits a basis of vectors in ${U}_1$.
  \end{lemma}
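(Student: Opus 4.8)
The plan is to exploit the fact that $x\mapsto x^{[s]}$ generates $\mathrm{Gal}(\FF_{q^n}/\FF_q)$, so the hypothesis $W=W^{[s]}$ already forces $W=W^{[i]}$ for every $i$; in particular $W$ is invariant under the full Galois group. I would then set things up so that ``$\FF_{q^n}$-subspace of $\cL_{n,q}$ which is Galois-stable'' becomes ``$\FF_q$-form of an $\FF_{q^n}$-subspace'', and hence admits an $\FF_q$-basis that one can push into $U_1$. Concretely, recall from Section~\ref{qpoly} that $\cL_{n,q}$ is an $\FF_{q^n}$-algebra of $\FF_{q^n}$-dimension $n$, isomorphic as an $\FF_q$-algebra to $\FF_q^{n\times n}$, via $f\mapsto D_f$ (the Dickson matrix). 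Under this isomorphism the map $f\mapsto f^{[s]}$ corresponds to a conjugation-type action on matrices; the key point I would verify is that the $\FF_{q^n}$-scalar structure on $\cL_{n,q}$, combined with Galois-stability, lets one apply the classical Galois descent / fixed-space argument of \cite[Lemma~3]{Lun99}.

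First I would argue as follows. Since $\gcd(s,n)=1$, iterating shows $W^{[i]}=W$ for all $i=0,\dots,n-1$. Next, consider the $\FF_q$-subspace $W_0:=\{f\in W : f^{[1]}=f\}$ — but $f^{[1]}=f$ for a linearized polynomial forces all coefficients in $\FF_q$, which is too restrictive; so instead I would use the standard additive Hilbert~90 style averaging. Pick any $\FF_{q^n}$-basis $f_1,\dots,f_r$ of $W$ (here $r=\dim_{\FF_{q^n}}W$). For a generic $\alpha\in\FF_{q^n}$ the elements $g_i:=\sum_{j=0}^{n-1}(\alpha f_i)^{[j]}$ — i.e. the "trace along the Galois orbit" applied coordinatewise to the coefficients of $\alpha f_i$ — lie in $W$ because $W$ is Galois-stable and $\FF_{q^n}$-linear, and they are fixed by $[1]$, hence have coefficients in $\FF_q$. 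A dimension/genericity count (normal basis theorem, as in Lemma~\ref{l-basis}) shows that for suitable choices of the $\alpha$'s these $g_i$ span an $\FF_q$-subspace $W_{\FF_q}\subseteq W$ with $\dim_{\FF_q}W_{\FF_q}=r$ and $W_{\FF_q}\otimes_{\FF_q}\FF_{q^n}=W$. Thus $W$ has an $\FF_q$-basis $b_1,\dots,b_r$ together with the property that $\FF_{q^n}\,b_1+\dots+\FF_{q^n}\,b_r=W$.

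Having produced such an "$\FF_q$-rational" basis, the final step is to replace it by a basis lying in $U_1$. For this I would use Lemma~\ref{l-basis}: $\cL_{n,q}$ itself has an $\FF_q$-basis $\fB=\{\overline{\alpha_1},\dots,\overline{\alpha_n}\}\subseteq U_1$ with $\overline{\alpha_i}=\Tr\circ\tau_{\alpha_i}$. Because $U_1=\{\Tr\circ\tau_\alpha:\alpha\in\FF_{q^n}\}$ is itself an $n$-dimensional $\FF_q$-space and the assignment $\alpha\mapsto\overline{\alpha}$ is $\FF_q$-linear and injective, $W_{\FF_q}$ — being an $\FF_q$-subspace that tensors up to the $\FF_{q^n}$-space $W$ — corresponds under the linear isomorphism $U_1\cong\FF_{q^n}$ (sending $\overline\alpha\mapsto\alpha$) to an $\FF_q$-subspace $S\subseteq\FF_{q^n}$ of $\FF_q$-dimension $r$; one then checks $\{\overline\alpha:\alpha\in S\}\subseteq U_1\cap W$ has dimension $r$ and hence is a basis of $W$. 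Actually the cleanest route: show directly that $U_1\cap W$ is an $\FF_q$-subspace of $W$ of dimension $r$ by combining the fact that $\fB\subseteq U_1$ is an $\FF_q$-basis of $\cL_{n,q}$ with the projection onto $W$ along a Galois-stable complement (which exists since $W$, being $\FF_{q^n}$-stable and Galois-stable, has such a complement, e.g. its orthogonal complement for the form $b$ of Section~\ref{qpoly}).

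The main obstacle I expect is the bookkeeping in the descent step: carefully justifying that the "trace-averaging" over the Galois orbit, which acts on the $\FF_{q^n}$-coefficients of the linearized polynomials, genuinely lands inside $W$ and produces a full-rank $\FF_q$-form rather than collapsing dimension. This is where the normal basis theorem (\cite[Corollary~2.38]{lidl_finite_1997}, already invoked in Lemma~\ref{l-basis}) does the real work, and one must phrase the $\FF_{q^n}$-linearity of $W$ correctly — note $W$ is an $\FF_{q^n}$-subspace for the scalar action $f\mapsto \lambda f$ (coefficientwise), not for $\circ$-composition, and $(\lambda f)^{[s]}=\lambda^{[s]}f^{[s]}$, so Galois-stability of $W$ really is needed in addition to $\FF_{q^n}$-linearity. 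Once that is pinned down, the passage to a basis inside $U_1$ is essentially a change-of-basis matching two $n$-dimensional $\FF_q$-spaces and is routine.
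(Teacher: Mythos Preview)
Your proposal contains a genuine error that derails the argument. You write that ``$f^{[1]}=f$ for a linearized polynomial forces all coefficients in $\FF_q$, which is too restrictive.'' This is false under the paper's convention: for $f(x)=\sum_i a_i x^{[i]}$ one has $f(x)^{[1]}=\sum_i a_i^{[1]}x^{[i+1]}$ (the operation is $f\mapsto f^q$, which shifts the exponents as well as raising coefficients to the $q$-th power; compare the computation in Remark~\ref{k-2}). Hence $f^{[1]}=f$ is equivalent to $a_i=a_0^{[i]}$ for all $i$, i.e.\ $f=\Tr\circ\tau_{a_0}\in U_1$. In other words, the fixed-point set of $f\mapsto f^{[1]}$ on $\cL_{n,q}$ is \emph{exactly} $U_1$, and the subspace $W_0=\{f\in W:f^{[1]}=f\}$ you dismiss is precisely $W\cap U_1$; showing it has $\FF_q$-dimension equal to $\dim_{\FF_{q^n}}W$ is the entire content of the lemma. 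Your averaged elements $g_i$ are fixed by $[1]$ and therefore already lie in $U_1$, so the final paragraph attempting to transport $W_{\FF_q}$ into $U_1$ via an isomorphism $U_1\cong\FF_{q^n}$ is both unnecessary and ill-posed (under your mistaken reading $W_{\FF_q}$ would sit inside the $\FF_q$-span of the monomials $x^{[i]}$, which has trivial intersection with $U_1$, and the map you describe does not carry it into $U_1$).

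The paper's proof avoids descent machinery altogether. It fixes the basis $\fB\subseteq U_1$ of $\cL_{n,q}$ from Lemma~\ref{l-basis}, notes that every $b\in\fB$ satisfies $b^{[s]}=b$, and takes the unique reduced row-echelon matrix $G$ whose rows represent a basis of $W$ relative to $\fB$. Since $W^{[s]}=W$ and $\fB^{[s]}=\fB$, the entrywise Frobenius $G^{[s]}$ is again an RREF matrix for $W$, whence $G^{[s]}=G$ and (using $\gcd(s,n)=1$) all entries of $G$ lie in $\FF_q$; the corresponding basis vectors are then $\FF_q$-combinations of elements of $U_1$, hence in $U_1$. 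This two-line argument replaces the genericity and rank bookkeeping you yourself flagged as the main obstacle.
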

  \begin{proof}
    By Lemma~\ref{l-basis}, there exists a
    basis $\fB$ of $\cL_{n,q}$ consisting of vectors of ${U}_1$.
    In particular, for any $b\in\fB$ and $i=0,\dots,n-1$ we have $b^{[i]}=b$.
    There is a unique matrix $G$ in row reduced echelon form whose
    rows contain the components of a basis of $W$ with respect to the basis
    $\fB$.
    Since $W=W^{[s]}$ and $\fB^{[s]}=\fB$ we have that
    the rows of $G^{[s]}$ contain the components of a basis of
    $W^{[s]}$ with respect to $\fB$; however $G^{[s]}$ represents
    also the vectors of a basis of $W$ and it is in
    row reduced echelon form; so $G^{[s]}=G$.
    Since $\gcd(s,n)=1$
    this yields that all entries of $G^{[s]}$ are defined over
    $\FF_q$. In particular, each vector of this basis of $G$ is
    in the vector space $U_1$ over $\FF_q$, that is it has
    rank $1$.
  \end{proof}

The following Lemma rephrases the requirements of
Theorem~\ref{gabidulin-d} in a more suitable way for the arguments
to follow.
\begin{lemma}\label{Gablemma}
Let $n$ and $s$ be two integers such that $\gcd(s,n)=1$ and let $\cC$ be an $\FF_{q^n}$-subspace of dimension $k>1$ of $\cL_{n,q}$.
If $\dim(\cC \cap \cC^{[s]})=k-1$ and $\cC \cap U_1 = \{0\}$, then there exists $p(x)$ such that
\[ \cC=\langle p(x),p(x)^{[s]},\ldots,p(x)^{[{s(k-1)}]} \rangle_{\FF_{q^n}}. \]
If $\cC$ contains at least one
invertible linearized polynomial, then $p(x)$ is invertible and $\cC\cong\cG_{k,s}$.
\end{lemma}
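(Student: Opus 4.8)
The plan is to build the polynomial $p(x)$ as a generator of a suitable one-dimensional quotient and then show its iterates under $x\mapsto x^{[s]}$ span $\cC$. Set $\cD:=\cC\cap\cC^{[s]}$, which has dimension $k-1$ by hypothesis. The key structural idea is to look at the subspace obtained by intersecting all the conjugates of $\cC$: put $W:=\bigcap_{j=0}^{n-1}\cC^{[js]}$. Clearly $W^{[s]}=W$ (conjugation by $[s]$ just permutes the intersectands cyclically, since $\gcd(s,n)=1$), so by Lemma~\ref{fixedspace}, if $W\neq\{0\}$ then $W$ contains a nonzero vector of $U_1$; but $W\subseteq\cC$ and $\cC\cap U_1=\{0\}$, a contradiction. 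Hence $W=\{0\}$. On the other hand, a dimension count on the descending chain $\cC\supseteq\cC\cap\cC^{[s]}\supseteq\cC\cap\cC^{[s]}\cap\cC^{[2s]}\supseteq\cdots$ shows that each successive intersection drops dimension by at most $1$ (because $\cC\cap\cC^{[s]}\cap\cdots\cap\cC^{[js]} = (\cC\cap\cdots\cap\cC^{[(j-1)s]})\cap\cC^{[js]}$ and intersecting a subspace of a $k$-dimensional space inside another drops dimension by at most the codimension of $\cC^{[js]}$ in the ambient, which... — more carefully, one argues inductively that $\dim(\cC\cap\cC^{[s]}\cap\cdots\cap\cC^{[js]})\geq k-j$, using that if $A\subseteq\cC$ and $\dim A = k-j$ then $\dim(A\cap\cC^{[(j+1)s]})\geq \dim A - \mathrm{codim}_\cC(\cC\cap\cC^{[(j+1)s]})$ and the latter codimension is at most $1$ since $\dim(\cC\cap\cC^{[(j+1)s]}) = k-1$). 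Since $W=\bigcap_{j=0}^{n-1}\cC^{[js]}=\{0\}$ has dimension $0$, and the chain from $\cC$ (dimension $k$) to $W$ (dimension $0$) consists of at most... we need $n\ge k$, which holds since $\cC$ is a $k$-dimensional $\FF_{q^n}$-subspace of the $n$-dimensional space $\cL_{n,q}$. Thus the chain must drop by exactly $1$ at each of the first $k-1$ steps and then stabilize, giving in particular $\dim(\cC\cap\cC^{[s]}\cap\cdots\cap\cC^{[(k-1)s]})=1$.

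The next step is to pick a nonzero $p(x)$ in the one-dimensional space $\cC\cap\cC^{[s]}\cap\cdots\cap\cC^{[(k-1)s]}$ and to verify that $\langle p, p^{[s]},\dots,p^{[s(k-1)]}\rangle_{\FF_{q^n}}=\cC$. Since both sides are contained in $\cC$ and $\cC$ has dimension $k$, it suffices to prove the $p^{[js]}$ are $\FF_{q^n}$-linearly independent. Suppose not; then some nontrivial $\FF_{q^n}$-combination vanishes, i.e. $q(x):=\sum_{j=0}^{k-1}c_j\, p(x)^{[js]}=0$. Let $j_0$ be the smallest index with $c_{j_0}\ne 0$; applying $[-j_0 s]$ (legitimate because $\gcd(s,n)=1$, so $[s]$ is invertible on $\cL_{n,q}$) we get a relation $\sum_{j\ge j_0} c_j'\, p(x)^{[js]}$ with a shift, and iterating/minimality we can reduce to the case $j_0=0$. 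Then $p(x)=-\sum_{j=1}^{k-1}(c_j/c_0)\,p(x)^{[js]}$, which exhibits $p$ as lying in $\langle p^{[s]},\dots,p^{[s(k-1)]}\rangle^{[?]}$; more usefully, the relation says the $\FF_{q^n}$-span of $\{p^{[js]}: j\in\mathbb Z\}$ (which is $[s]$-invariant) has dimension $\le k-1 < n$, so its $[s]$-closure $W'$ satisfies $W'=W'^{[s]}$, $W'\neq\{0\}$, $W'\subseteq\cC$; by Lemma~\ref{fixedspace} $W'$ meets $U_1$ nontrivially, contradicting $\cC\cap U_1=\{0\}$ again. Hence the $p^{[js]}$, $j=0,\dots,k-1$, are independent and span $\cC$.

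Finally, for the last sentence: suppose $\cC$ contains an invertible linearized polynomial $f$. Since $\cC=\langle p, p^{[s]},\dots,p^{[s(k-1)]}\rangle$, write $f=\sum_j c_j p^{[js]}$; if $p$ were not invertible, then $\ker p\neq\{0\}$, and—because conjugation by $[js]$ sends $\ker p$ to $(\ker p)^{[js]}$, which has the same dimension—each $p^{[js]}$ has a nontrivial kernel, but this does not immediately bound $\ker f$. Instead I would argue via the Dickson matrix or, cleaner, via the fact that the whole code $\cC$ then consists of multiples-of-$p$-type maps: more precisely, $f=g\circ p$ for a suitable linearized $g$? That is not obviously true. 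The cleanest route: $\cC=\langle p,\dots,p^{[s(k-1)]}\rangle$; applying a field automorphism $\sigma=[-\ell s]$ for appropriate $\ell$ and left/right composition we would like to bring $\cC$ into the shape $\cG_{k,s}$ exactly when $p$ is invertible — if $p$ is invertible, then $p\circ(-)$ composed suitably... Actually $\langle p, p^{[s]},\dots\rangle$ need not be closed under $\circ p^{-1}$. The honest statement to prove is: if $\cC$ contains an invertible element then $p$ is invertible, and then $\cC$ is equivalent to $\cG_{k,s}$. For the first claim I would use that an MRD code of minimum distance $d=n-k+1$ (here $k>1$ so $d<n$) has codewords of every rank between $d$ and $n$ by Corollary~\ref{weight}, hence in particular contains an invertible element automatically — so the hypothesis is not really extra, but assuming it, if $p$ had nontrivial kernel $K$, then every element of $\cC$ — being an $\FF_{q^n}$-combination of $p^{[js]}$ — would... have kernel containing $\bigcap_j K^{[js]}$, and since this intersection is $[s]$-invariant and lies in $\ker f=\{0\}$ it is $\{0\}$; that still doesn't force $K=\{0\}$. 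The genuinely correct argument, which I expect to be the main obstacle, uses the MRD property more heavily: the number of codewords of each rank in an MRD code is determined (the weight distribution is fixed), and if $p$ (equivalently all $p^{[js]}$) had rank $r<n$, a rank argument on $\FF_{q^n}$-combinations — essentially that the map $(c_0,\dots,c_{k-1})\mapsto \sum c_j p^{[js]}$ factors through the $r$-dimensional image situation — would give too few high-rank codewords, contradicting Corollary~\ref{weight} and the MRD hypothesis. So the plan for the last line is: invoke that $\cC$ MRD with $k<n$ forces existence of invertible elements, deduce $p$ invertible by the rank/weight-distribution obstruction, and then conclude $\cC=\langle p,p^{[s]},\dots,p^{[s(k-1)]}\rangle$ with $p$ invertible is equivalent, via $g\mapsto g\circ p^{-1}$ acting as a right idealiser-type equivalence wait — right composition by $p^{-1}$ sends $p^{[js]}$ to $p^{[js]}\circ p^{-1}$, not obviously $x^{[js]}$. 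The actual equivalence realizing $\cC\cong\cG_{k,s}$ when $p$ is invertible is the content I would need to supply carefully; I expect this final identification to be the hard part of the whole lemma, and I would handle it by showing $\{h\circ f\circ g\}$ can be chosen to send $p\mapsto x$ and commute appropriately with $[s]$, using invertibility of $p$ essentially.
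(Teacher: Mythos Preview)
Your descending-chain approach to producing $p(x)$ is different from the paper's induction on $k$ and can be made to work, but two points need repair. First, if you pick $p$ in $A_{k-1}:=\cC\cap\cC^{[s]}\cap\cdots\cap\cC^{[(k-1)s]}$, then $p\in\cC^{[js]}$ means $p^{[-js]}\in\cC$, not $p^{[js]}\in\cC$; so it is $p^{[-(k-1)s]},\dots,p^{[-s]},p$ that lie in $\cC$, and you must relabel (set $\tilde p:=p^{[-(k-1)s]}$) before claiming the span sits inside $\cC$. Second, your linear-independence argument asserts that the $[s]$-closure $W'$ of $\{p^{[js]}:j\in\mathbb Z\}$ lies in $\cC$, which is not justified. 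A clean fix is to observe that the strict chain $\cC=A_0\supsetneq A_1\supsetneq\cdots\supsetneq A_k=\{0\}$ (strictness via Lemma~\ref{fixedspace}, as you already argue for $W$) together with $\tilde p^{[js]}\in A_j\setminus A_{j+1}$ (the latter because $p\notin\cC^{[ks]}$) forces independence directly.

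The genuine gap is in the last part, and it stems from a misreading of the notation. In this paper $f^{[s]}$ means $x^{[s]}\circ f$, i.e.\ $f^{[s]}(y)=(f(y))^{q^s}$; consequently $\ker p^{[js]}=\ker p$, not $(\ker p)^{[js]}$. Once you see this, both remaining claims are one line each. If $p(x_0)=0$ for some $x_0\neq 0$, then every generator $p^{[js]}$ vanishes at $x_0$, hence so does every $\FF_{q^n}$-combination, and $\cC$ contains no invertible element; contrapositively, the hypothesis forces $p$ invertible. For the equivalence, right composition with $p^{-1}$ does exactly what you doubted: $(p^{[js]}\circ p^{-1})(y)=\bigl(p(p^{-1}(y))\bigr)^{q^{js}}=y^{[js]}$, so $\cC\circ p^{-1}=\langle x,x^{[s]},\dots,x^{[s(k-1)]}\rangle=\cG_{k,s}$. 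This is precisely the paper's argument; the weight-distribution and idealiser detours you sketch are unnecessary.
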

\begin{proof}
  Note that, since $\cC$ is an $\FF_{q^n}$-subspace and $\cC \cap U_1=\{0\}$, then $\cC\cap\mathcal{U}_1=\{0\}$. We argue by induction.
  We first prove the case $k=2$. By hypothesis, $\cC\cap \cC^{[s]}=\langle h(x) \rangle$ and so $h(x)^{[s]} \in \cC^{[s]}$.
  Since $\cC \cap\mathcal{U}_1=\{0\}$, by Lemma \ref{trace} the polynomials $h(x)$ and $h(x)^{[s]}$ are linearly independent over $\FF_{q^n}$ and $\cC=\langle h(x)^{[s(n-1)]}, h(x) \rangle_{\FF_{q^n}}=\langle p(x), p(x)^{[s]} \rangle_{\FF_{q^n}}$, with $p(x)=h(x)^{[{s(n-1)}]}$.

Suppose now that the assert holds true for $k-1$ and take $k>2$.
Let $V:=\cC\cap\cC^{[s]}$, $V$ is an $\FF_{q^n}$-subspace of $\cC$ of dimension $k-1$ such that $V \cap \mathcal{U}_1=\{0\}$, hence by Lemma \ref{fixedspace} $V\neq V^{[s]}$.
Then, since $V$ and  $V^{[s]}$ are both contained in $\cC^{[s]}$,
by Grassmann's formula
\[ \dim (V\cap V^{[s]})=k-2. \]
So, $\dim V=k-1$, $V\cap \mathcal{U}_1 =\{0\}$ and $\dim (V \cap V^{[s]})=k-2$. By induction, there is $h(x) \in V$ such that
\[V=\langle h(x), h(x)^{[s]},\ldots, h(x)^{[{s(k-2)}]} \rangle_{\FF_{q^n}}.\]
Also,
\[ h(x)^{[{s(n-1)}]} \in V^{[{s(n-1)}]}=\cC^{[{s(n-1)}]}\cap\cC \subset \cC.\]
If it were $h(x)^{[{s(n-1)}]} \in V$, then $V=V^{[s]}$,
which has already been excluded.
So,
\[ \cC=\langle p(x), p(x)^{[s]}, \ldots, p(x)^{[{s(k-1)}]} \rangle_{\FF_{q^n}}, \]
where $p(x)=h(x)^{[{s(n-1)}]}$.

Suppose now there is $x_0\in\FF_{q^n}^*$ such that $p(x_0)=0$. Then,
$\alpha_1p(x_0)+\cdots+\alpha_kp(x_0)^{[s(k-1)]}=0$ for any choice
of $\alpha_i\in\FF_{q^n}$, $i=1,\ldots,k$.
In particular, if $\cC$ contains at least
one invertible linearized polynomial, then $p(x)$ must also be
invertible.
In such a case
\[ \cC \circ p^{-1}(x)=\langle x, x^{[s]}, \ldots, x^{[{s(k-1)}]} \rangle_{\FF_{q^n}};\]
so $\cC$ is equivalent to $\mathcal{G}_{k,s}$.
\end{proof}

We now focus on the case $\dim (\cC \cap \cC^{[s]})=k-2$.
\begin{itemize}
  \item
    If $\dim \cC=2$ we just have
$\cC=\langle p(x), q(x)\rangle_{\FF_{q^n}}$ with $q(x) \notin \langle p(x)^{[s]}\rangle_{\FF_{q^n}}$ and $p(x) \notin \langle q(x)^{[s]}\rangle_{\FF_{q^n}}$.
\item
Suppose $\dim \cC=3$, $\dim (\cC \cap \cC^{[s]})=1$ and $\cC \cap \mathcal{U}_1=\{0\}$.
As before, write $V:=\cC \cap \cC^{[s]}$.
Since $V$ and $V^{[s]}$ are contained in $\cC^{[s]}$, by Grassmann's formula,
\[ 0\leq \dim(V \cap V^{[s]}) \leq 1. \]
So, either $V=V^{[s]}$ or $\dim (V \cap V^{[s]})=0$.
The former case is ruled out by Lemma~\ref{fixedspace}.
So $\dim (V \cap V^{[s]})=0$ and $V=\langle h(x) \rangle_{\FF_{q^n}}$.
It follows that
\[\cC= \langle p(x), p(x)^{[s]} \rangle_{\FF_{q^n}} \oplus \langle q(x) \rangle_{\FF_{q^n}},\]
with $p(x)=h(x)^{[{s(n-1)}]}$.
\item
Suppose that $\dim \cC=4$, $\dim (\cC \cap \cC^{[s]})=2$ and $\cC \cap \mathcal{U}_1=\{0\}$. Write $V:=\cC \cap \cC^{[s]}$.
Clearly, since $V\neq V^{[s]}$,
\[ 0 \leq \dim(V \cap V^{[s]}) \leq 1. \]

Suppose $\dim (V \cap V^{[s]})=1$.
Then, the subspace $V$ fulfills all of the assumptions of Lemma~\ref{Gablemma},
so there is
$h(x) \in V$ such that
\[ V=\langle h(x), h(x)^{[s]} \rangle_{\FF_{q^n}} \]
and $h(x)^{[{s(n-1)}]}\in \cC \setminus V$, since, otherwise, $V=V^{[s]}$.
So,
\[ \cC = \langle p(x), p(x)^{[s]}, p(x)^{[{2s}]} \rangle_{\FF_{q^n}} \oplus \langle q(x) \rangle_{\FF_{q^n}}, \]
with $p(x)=h(x)^{[{s(n-1)}]}$.

Suppose now that $\dim(V \cap V^{[s]})=0$; then
$\cC= V \oplus V^{[{s(n-1)}]}$.
If $V=\langle h(x), g(x) \rangle_{\FF_{q^n}}$ then $V^{[{s(n-1)}]}=\langle h(x)^{[{s(n-1)}]}, g(x)^{[{s(n-1)}]} \rangle_{\FF_{q^n}}$,
and so
\[ \cC=\langle p(x), p(x)^{[s]}\rangle_{\FF_{q^n}} \oplus \langle q(x), q(x)^{[s]}\rangle_{\FF_{q^n}}, \]
with $p(x)=h(x)^{[{s(n-1)}]}$ and $q(x)=g(x)^{[{s(n-1)}]}$.
\end{itemize}

More generally, we can prove the following result.

\begin{theorem}
\label{mth1}
  Let $n$ and $s$ be two integers such that $\gcd(s,n)=1$ and let $\cC$ be an $\FF_{q^n}$-subspace of dimension $k>2$ of $\cL_{n,q}$.
  Let $V:=\cC \cap \cC^{[s]}$.
  Suppose $\dim V=k-2$ and $\cC \cap \mathcal{U}_1 =\{0\}$, then $\cC$ has one of the following forms
\begin{enumerate}
  \item if $\dim (V \cap V^{[s]})=k-3$, then there exist $p(x)$ and $q(x)$ in $\cC$ such that
        \[ \cC = \langle p(x), p(x)^{[s]}, \ldots, p(x)^{[{s(k-2)}]} \rangle_{\FF_{q^n}} \oplus \langle q(x) \rangle_{\FF_{q^n}}; \]
  \item if $\dim (V \cap V^{[s]})=k-4$, then there exist $p(x)$ and $q(x)$ in $\cC$ such that
        \[ \cC = \langle p(x), p(x)^{[s]}, \ldots, p(x)^{[{s(i-1)}]} \rangle_{\FF_{q^n}} \oplus \langle q(x), q(x)^{[s]}, \ldots, q(x)^{[{s(j-1)}]} \rangle_{\FF_{q^n}}, \]
        where $i+j=k$ and $i,j \geq 2$.
\end{enumerate}
\end{theorem}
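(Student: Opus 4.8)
The plan is to run the same induction used in Lemma~\ref{Gablemma} and in the low-dimensional cases above ($k=3,4$), now carried one step further. The key structural fact is Grassmann's formula applied to the pair $V,V^{[s]}\subseteq\cC^{[s]}$: since $V$ has dimension $k-2$ and lives together with $V^{[s]}$ inside the $k$-dimensional space $\cC^{[s]}$, we get $\dim(V\cap V^{[s]})\geq (k-2)+(k-2)-k=k-4$, so only the two cases $\dim(V\cap V^{[s]})\in\{k-3,k-4\}$ can occur (the case $V=V^{[s]}$, i.e.\ dimension $k-2$, being excluded by Lemma~\ref{fixedspace} since $V\cap\mathcal U_1\subseteq\cC\cap\mathcal U_1=\{0\}$ and $V\neq\{0\}$). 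As in Lemma~\ref{Gablemma}, I first record that $\cC\cap U_1=\{0\}$ forces $\cC\cap\mathcal U_1=\{0\}$ because $\cC$ is $\FF_{q^n}$-linear and $\mathcal U_1=\bigcup_{\beta}\tau_\beta\circ U_1$; the same passes to every $\FF_{q^n}$-subspace we produce.

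\emph{Case 1: $\dim(V\cap V^{[s]})=k-3$.} Then $V$ itself satisfies the hypotheses of Lemma~\ref{Gablemma} with dimension $k-1\ge 2$ (namely $V\cap\mathcal U_1=\{0\}$ and $\dim(V\cap V^{[s]})=\dim V-1$), so there is $h(x)$ with $V=\langle h,h^{[s]},\dots,h^{[s(k-3)]}\rangle_{\FF_{q^n}}$. Setting $p(x)=h(x)^{[s(n-1)]}$ we have $p,p^{[s]},\dots,p^{[s(k-2)]}$ all in $\cC$ (indeed $h^{[s(n-1)]}\in V^{[s(n-1)]}=\cC^{[s(n-1)]}\cap\cC\subseteq\cC$, and the rest lie in $V$), they are $\FF_{q^n}$-independent by the same argument as in Lemma~\ref{Gablemma} (their independence reduces to independence over successive Frobenius shifts, which would fail only if $p$ had rank $\le1$, contradicting $\cC\cap\mathcal U_1=\{0\}$), and $p\notin V$ since otherwise $V=V^{[s]}$. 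Thus $\langle p,p^{[s]},\dots,p^{[s(k-2)]}\rangle_{\FF_{q^n}}$ is a $(k-1)$-dimensional subspace of $\cC$, and choosing any $q(x)\in\cC$ outside it gives $\cC=\langle p,\dots,p^{[s(k-2)]}\rangle_{\FF_{q^n}}\oplus\langle q\rangle_{\FF_{q^n}}$.

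\emph{Case 2: $\dim(V\cap V^{[s]})=k-4$.} Now $V$ itself is in the situation of the theorem one dimension lower ($\dim V=k-2$, $\dim(V\cap V^{[s]})=(k-2)-2$, $V\cap\mathcal U_1=\{0\}$), so I would invoke the inductive hypothesis on $V$: it splits as $\langle p,p^{[s]},\dots,p^{[s(i-1)]}\rangle\oplus\langle q,q^{[s]},\dots,q^{[s(j-1)]}\rangle$ with $i+j=k-2$, $i,j\ge 2$ — except that the base cases of this sub-induction ($\dim V=2,3,4$) are precisely the itemized cases already treated in the text before the theorem, which also cover the degenerate possibility that one summand of $V$ is $1$-dimensional. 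From a decomposition of $V$, applying $[{s(n-1)}]$ and using $V^{[s(n-1)]}\subseteq\cC$, one enlarges each cyclic block by one Frobenius step, obtaining the two blocks of sizes $i{+}1$ and $j{+}1$ whose sizes add to $k$; one must then check these two enlarged blocks are still in direct sum and span all of $\cC$, which follows by a dimension count together with $\dim(V\cap V^{[s]})=k-4$ (this forces the two extra generators $p^{[s(n-1)]},q^{[s(n-1)]}$ to be independent modulo $V$).

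\emph{Main obstacle.} The delicate point is the bookkeeping in Case~2: one has to show that the two new generators obtained by back-shifting the two blocks of $V$ land in $\cC$ and are independent modulo $V$ (so that the two enlarged blocks really are complementary and exhaust $\cC$), and simultaneously that applying Grassmann to $V$ gives exactly the predicted drop so that no third block appears. Handling the boundary sub-cases — where a block of $V$ has dimension $1$, or where $i$ or $j$ equals $2$ — cleanly, matching them to the already-proved itemized statements, is where the argument needs the most care; everything else is a faithful repetition of the induction in Lemma~\ref{Gablemma}.
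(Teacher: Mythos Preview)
Your proposal is correct and follows essentially the same induction as the paper's proof: Grassmann on $V,V^{[s]}\subseteq\cC^{[s]}$ to isolate the two cases, Lemma~\ref{Gablemma} applied to $V$ in Case~1, and in Case~2 the inductive hypothesis on $V$ together with the key identity $\cC=V+V^{[s(n-1)]}$ (which is exactly the dimension count you describe) to enlarge each cyclic block by one Frobenius step. One slip: in Case~1 you write that $V$ ``satisfies the hypotheses of Lemma~\ref{Gablemma} with dimension $k-1$'', but $\dim V=k-2$; the rest of your argument (e.g.\ $V=\langle h,\dots,h^{[s(k-3)]}\rangle$) uses the correct value, so this is only a typo.
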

\begin{proof}
  We have already proved the assert for $k \leq 4$.
  Assume by induction that the assert holds for each $t <k$ with $k\geq 4$.
Since $V$ and $V^{[s]}$ are contained in $\cC^{[s]}$, it follows that
\[ \dim (V\cap V^{[s]})\geq k-4, \]
that is  $\dim (V\cap V^{[s]}) \in \{k-4,k-3\}$, since $V \neq V^{[s]}$.
If $\dim (V\cap V^{[s]})=k-3$, then, by Lemma \ref{Gablemma}, there exists $h(x) \in V$ such that
\[ V=\langle h(x), h(x)^{[s]}, \ldots, h(x)^{[{s(k-3)}]} \rangle_{\FF_{q^n}}.\]
Since $h(x)^{[{s(n-1)}]}\in \cC \setminus V$ (otherwise $V=V^{[s]}$), we get
\[ \cC=\langle p(x), p(x)^{[s]},\ldots, p(x)^{[{s(k-2)}]} \rangle_{\FF_{q^n}}\oplus \langle q(x) \rangle_{\FF_{q^n}}, \]
where $p(x)=h(x)^{[{s(n-1)}]}$.
If $\dim (V \cap V^{[s]})=k-4$, since $V$ has dimension $k-2$ and $V\cap \mathcal{U}_1=\{0\}$, by induction there exist $h(x)$ and $g(x)$ such that
either
\[ V=\langle h(x),\ldots,h(x)^{[s[k-4]]}\rangle_{\FF_{q^n}} \oplus \langle g(x) \rangle_{\FF_{q^n}} \]
or
\[ V=\langle h(x),\ldots, h(x)^{[{s(l-1)}]} \rangle_{\FF_{q^n}} \oplus \langle g(x),\ldots, g(x)^{[{s(m-1)}]} \rangle_{\FF_{q^n}}, \]
with $l+m=k-2$.
Since $V, V^{[{s(n-1)}]} \subset \cC$ and $\dim V\cap V^{[s]}=k-4$
we get $\cC=V + V^{[{s(n-1)}]}$.
So,
either
\[ \cC=\langle h(x)^{[{s(n-1)}]}, h(x), \ldots, h(x)^{[{s(k-4)}]} \rangle_{\FF_{q^n}} \oplus \langle g(x)^{[{s(n-1)}]}, g(x) \rangle_{\FF_{q^n}} \]
or
\[ \cC=\langle h(x)^{[{s(n-1)}]}, h(x), \ldots, h(x)^{[{s(l-1)}]} \rangle_{\FF_{q^n}} \oplus \langle g(x)^{[{s(n-1)}]}, g(x), \ldots, g(x)^{[{s(m-1)}]} \rangle_{\FF_{q^n}}. \]
If we now put $p(x)=h(x)^{[{s(n-1)}]}$ and $q(x)=g(x)^{[{s(n-1)}]}$,
then we get the assert.
\end{proof}

Examples of $k$-dimensional MRD-codes $\cC$ with $\dim(\cC\cap\cC^{[s]})=k-2$ and $\dim (V\cap V^{[s]})=k-3$, where $V=\cC\cap\cC^{[s]}$,
are the generalized twisted Gabidulin codes; see Remark \ref{k-2}.
An example where $\dim V\cap V^{[s]}=k-4$
is given by the code $\mathcal{D}_2$ (see Table \ref{DkMRD}), which can be written as
\[ \mathcal{D}_2=\langle -x+x^{[2]} , -x^{[1]}+x^{[3]} \rangle_{\FF_{q^6}} \oplus \langle -\delta x^{[1]} +x^{[3]}, -\delta x^{[2]}+x^{[4]} \rangle_{\FF_{q^6}}. \]


\begin{lemma}
  \label{ll0}
Let $\cC\subseteq \cL_{n,q}$ be an $\FF_{q^n}$-linear RM-code  with dimension $k$ containing a MRD-code $\cG$ equivalent to a generalized Gabidulin code $\cG_{l,s}$ of dimension $l \leq k$, then there exists a permutation linearized polynomial $p(x)$ and $(k-l)$ linearized polynomials $q_1(x),\ldots,q_{k-l}(x)$ such that
\begin{equation}\label{form}
  \cC=\langle q_1(x),\ldots,q_{k-l}(x), p(x), p(x)^{[s]},\ldots p(x)^{[s(l-1)]}
  \rangle.
\end{equation}
\end{lemma}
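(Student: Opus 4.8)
The plan is to proceed in two stages: first produce the permutation polynomial $p(x)$ together with its "conjugate chain" $p(x), p(x)^{[s]},\dots,p(x)^{[s(l-1)]}$ living inside $\cC$, and then complete this to a basis of $\cC$ by choosing arbitrary complementary vectors $q_1(x),\dots,q_{k-l}(x)$. For the first stage, note that by hypothesis $\cC$ contains a subcode $\cG$ equivalent to $\cG_{l,s}$; write $\cG = \{h\circ f\circ g : f\in\cG_{l,s}\}$ for suitable invertible $h,g$ and (possibly) a Frobenius twist, which however does not affect the shape $\langle x^{[0]},x^{[s]},\dots,x^{[s(l-1)]}\rangle$ up to replacing $s$ by $\pm s$. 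Thus $\cG = h\circ\langle x^{[0]},\dots,x^{[s(l-1)]}\rangle\circ g$. Since left and right composition with a fixed invertible polynomial is $\FF_{q^n}$-linear and intertwines with nothing problematic here, one checks directly that $\cG = \langle p(x), p(x)^{[s]},\dots,p(x)^{[s(l-1)]}\rangle_{\FF_{q^n}}$ for a suitable $p(x)$ built from $h$, $g$ — concretely, one takes $p(x) = h\circ x\circ g = h\circ g$ after absorbing the linear pieces, and then $h\circ x^{[sj]}\circ g = h\circ (g^{[-sj]}\circ x)^{[sj]} $ must be rewritten so that every generator is the $[sj]$-conjugate of a single $p(x)$. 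The cleanest route is to observe that $\cG_{l,s}$ is itself of the form $\langle x, x^{[s]},\dots\rangle$, apply Lemma~\ref{Gablemma} (or its proof) directly to $\cG$: $\cG$ is an $l$-dimensional $\FF_{q^n}$-space with $\dim(\cG\cap\cG^{[s]}) = l-1$, $\cG\cap\mathcal U_1 = \{0\}$ (since $\cG$ is MRD with $l\geq 2$, hence no rank-$\leq 1$ nonzero element when $d\geq 2$; the degenerate case $l=1$ is handled separately and trivially), and $\cG$ contains an invertible polynomial (again because it is MRD). Lemma~\ref{Gablemma} then yields exactly $\cG = \langle p(x), p(x)^{[s]},\dots,p(x)^{[s(l-1)]}\rangle_{\FF_{q^n}}$ with $p(x)$ invertible, i.e.\ a permutation linearized polynomial.

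For the second stage, extend the linearly independent set $\{p(x), p(x)^{[s]},\dots,p(x)^{[s(l-1)]}\}$ — which spans the $l$-dimensional subspace $\cG\subseteq\cC$ — to an $\FF_{q^n}$-basis of $\cC$ by adjoining $k-l$ further vectors $q_1(x),\dots,q_{k-l}(x)\in\cC$; this is possible since $\dim_{\FF_{q^n}}\cC = k \geq l$. Then \eqref{form} holds by construction.

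The one genuine subtlety — and the step I expect to be the main obstacle — is the passage from "equivalent to $\cG_{l,s}$" to the concrete conjugate-chain form, because the equivalence in the definition allows a \emph{Frobenius automorphism} $\sigma\colon x\mapsto x^{q^t}$ in addition to the two-sided composition. One must check that applying $\sigma$ to $\cG_{l,s}$ again produces a space of the form $\langle x^{[0]}, x^{[s']},\dots, x^{[s'(l-1)]}\rangle$ for some $s'$ with $\gcd(s',n)=1$ (indeed $s' \equiv s$ remains the shift, while the coefficients get twisted but the monomial supports only shift), so that Lemma~\ref{Gablemma} applies to $\cG$ with the \emph{same} exponent $s$ as in the statement. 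Once this is observed, the argument is routine: Lemma~\ref{Gablemma} does all the work, and the only thing to verify about $\cG$ is that it meets its own hypotheses, which follows from $\cG$ being MRD of dimension $l\leq k \leq n$ together with Corollary~\ref{weight} (guaranteeing an invertible element when $l < n$, and trivially when $l = n$) and the fact that an MRD-code with $d = n - l + 1 > 1$ contains no nonzero element of rank $\leq 1$ unless $l = n$, the case $l=n$ forcing $\cC = \cG$ and reducing to Lemma~\ref{Gablemma} directly.
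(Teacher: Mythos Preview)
Your approach is correct and essentially identical to the paper's: apply Lemma~\ref{Gablemma} to $\cG$ to obtain $\cG=\langle p(x),p(x)^{[s]},\dots,p(x)^{[s(l-1)]}\rangle_{\FF_{q^n}}$ with $p$ invertible, then extend this list to an $\FF_{q^n}$-basis of $\cC$. The paper's proof is terser (it simply invokes Lemma~\ref{Gablemma} without explicitly checking its hypotheses), while you supply the verifications that $\cG\cap\mathcal U_1=\{0\}$ from the MRD property, that $\cG$ contains an invertible element via Corollary~\ref{weight}, and that the Frobenius twist in the equivalence fixes $\cG_{l,s}$---these are precisely the details the paper leaves implicit.
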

We call the polynomials $q_i(x)$ of Lemma~\ref{ll0}
\emph{polynomials of extra type}.
\begin{proof}
By Lemma \ref{Gablemma}, there exists a permutation linearized polynomial $p(x)$ such that
\[\mathcal{G}= \langle p(x), p(x)^{[s]},\ldots,p(x)^{[s(l-1)]} \rangle_{\FF_{q^n}} \]
and $p(x)$,\dots,$p(x)^{[s(l-1)]}$ are linearly independent.
Now, we can extend the list of polynomials
$\{p(x), p(x)^{[s]},\ldots,p(x)^{[s(l-1)]}\}$ to a basis of $\cC$ with
suitable polynomials $q_i$ as to get the form~\eqref{form}.
\end{proof}

\begin{lemma}\label{charH}
If $\cC \subseteq \cL_{n,q}$ is an $\FF_{q^n}$-linear MRD-code of dimension $k$ containing a code $\cG$ equivalent to $\cG_{k-1,s}$, i.e. $\cG=\langle p(x), p(x)^{[s]},\ldots, p(x)^{[s(k-2)]} \rangle_{\FF_{q^n}}$ with $p(x)$ an invertible linearized polynomial, and for which there exists an extra polynomial $g(x)$ in $\langle p(x)^{[-s]},p(x)^{[s(k-1)]}\rangle_{\FF_{q^n}}$ with $g(x)=p(x)^{[-s]}+\eta p(x)^{[s(k-1)]}$ and $\N_{q^n/q}(\eta)\neq (-1)^{kn}$, then $\cC$ is equivalent to $\cH_{k,s}(\eta^{[s]})$.
\end{lemma}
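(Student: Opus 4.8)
The plan is to reduce $\cC$ to a code in standard twisted Gabidulin form by applying an equivalence of the kind $\{h\circ f\circ g\colon f\in\cC\}$ and then invoke Lemma~\ref{Gablemma} (more precisely the part of it giving the $\cG_{k-1,s}$ structure together with invertibility of the generator). First I would use that $p(x)$ is an invertible linearized polynomial, so $p^{-1}(x)$ exists, and compose on the right with $p^{-1}(x)^{[s]}\circ(\text{suitable power})$ — concretely, replace $\cC$ by $\cC\circ (p(x)^{[-s]})^{-1}$, equivalently conjugate so that the distinguished Gabidulin subcode becomes $\langle x, x^{[s]},\ldots,x^{[s(k-2)]}\rangle_{\FF_{q^n}}$. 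Under this equivalence the subcode $\cG$ becomes $\cG_{k-1,s}$ and the extra polynomial $g(x)=p(x)^{[-s]}+\eta\, p(x)^{[s(k-1)]}$ is carried to something of the form $x^{[-s]}+\eta' x^{[s(k-1)]}$ for a scalar $\eta'$; a short bookkeeping of how the twist coefficient transforms under the $[\,\cdot\,]$-shifts shows $\eta'$ is a Frobenius conjugate of $\eta$, which is why the statement produces $\cH_{k,s}(\eta^{[s]})$ rather than $\cH_{k,s}(\eta)$.

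Next I would rename indices: shifting everything by $[s]$ (i.e. composing with the equivalence $f\mapsto f^{[s]}$, which is allowed since raising to $q^s$ is a field automorphism) turns the subcode $\langle x^{[-s]}\circ(\cdots)\rangle$ picture into $\langle x+\eta^{[s]} x^{[sk]}, x^{[s]},\ldots,x^{[s(k-1)]}\rangle_{\FF_{q^n}}$, which is exactly the definition of $\cH_{k,s}(\eta^{[s]})$. At this point one must check that the code so obtained is genuinely $k$-dimensional and MRD: dimension $k$ follows because $\cC$ was assumed $k$-dimensional and the equivalences are bijective; the MRD property of $\cH_{k,s}(\eta^{[s]})$ is guaranteed by Sheekey's result (the Proposition quoted from \cite{Sheekey}) provided $\N(\eta^{[s]})\neq(-1)^{kn}$ — and $\N_{q^n/q}(\eta^{[s]})=\N_{q^n/q}(\eta)$ because the norm is Frobenius-invariant, so the hypothesis $\N_{q^n/q}(\eta)\neq(-1)^{kn}$ transfers verbatim.

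The main obstacle, and the only real content beyond routine substitution, is verifying that $\cC$ equals the span of $\cG$ together with the single extra polynomial $g(x)$ — i.e. that $k-1$ generators from the Gabidulin part plus $g$ actually exhaust $\cC$ and are linearly independent over $\FF_{q^n}$. Here I would argue that $g(x)\notin\cG$: since $g(x)$ has a nonzero $x^{[s(k-1)]}$ (equivalently $x^{[-s]}$) component lying outside the index set $\{0,s,\ldots,s(k-2)\}$ of $\cG$ after normalisation, and $\gcd(s,n)=1$ ensures these exponents are pairwise distinct modulo $n$, the polynomial $g$ is independent of $p(x),\ldots,p(x)^{[s(k-1)]}$'s first $k-1$ shifts; hence $\dim\langle\cG,g\rangle=k=\dim\cC$, forcing $\cC=\langle\cG,g\rangle$. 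Once this identification is in place the equivalence $\cC\cong\cH_{k,s}(\eta^{[s]})$ is immediate from the chain of compositions above, and one should double-check that all the maps used (composition with $p^{-1}$ on the appropriate side, and the Frobenius twist $f\mapsto f^{[s]}$) are indeed admissible equivalences in the sense recalled in Section~\ref{qpoly}, namely of the form $h\circ f^\sigma\circ g$ with $h,g$ invertible and $\sigma$ a field automorphism.
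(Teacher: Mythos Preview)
Your approach is essentially the paper's: identify $\cC=\cG\oplus\langle g\rangle$, apply the Frobenius shift $f\mapsto f^{[s]}$, and compose on the right with the inverse of $p$ (the paper performs the last two steps in the opposite order, writing $\cC^{[s]}=\cH_{k,s}(\eta^{[s]})\circ p$ and then peeling off $p$). Two small corrections: since $p(x)^{[js]}=x^{[js]}\circ p$, the right-composition that normalises $\cG$ to $\cG_{k-1,s}$ is with $p^{-1}$, not $(p^{[-s]})^{-1}$; and what you call ``the main obstacle'' --- that $g\notin\cG$ --- is not an obstacle at all, because ``extra polynomial'' is defined immediately after Lemma~\ref{ll0} to mean a completion of a basis of $\cG$ to one of $\cC$, so $g\notin\cG$ is part of the hypothesis.
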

\begin{proof}
By the previous lemma and by hypothesis,
\[ \cC=\langle g(x), p(x), p(x)^{[s]},\ldots, p(x)^{[s(k-2)]} \rangle_{\FF_{q^n}}, \]
with $p(x)$ permutation linearized polynomial and $g(x) = p(x)^{[-s]}+\eta p(x)^{[s(k-1)]}$.
Since $\cC$ and $\cC^{[s]}$ are equivalent, we can suppose that
\[ \cC=\langle q(x), p(x)^{[s]},\ldots, p(x)^{[s(k-1)]} \rangle_{\FF_{q^n}}, \]
with $q(x) = p(x)+\eta^{[s]} p(x)^{[sk]}$.
So,
\[ \cC=\langle x^{[s]},\ldots,x^{[{s(k-1)}]}, x+\eta^{[s]} x^{[sk]} \rangle_{\FF_{q^n}} \circ p(x). \]
Since $\cC$ is a MRD-code, then $\cC\circ p^{-1}(x)=\langle x^{[s]},\ldots,x^{[{s(k-1)}]}, x+\eta^{[s]} x^{[sk]} \rangle_{\FF_{q^n}}$ is also a MRD-code equivalent to $\cH_{k,s}(\eta^{[s]})$.
\end{proof}

Theorem~\ref{mth1} prompts the following characterization of generalized twisted Gabidulin codes.

\begin{theorem}
  Let $\cC$ be an $\FF_{q^n}$-linear MRD-code of dimension $k>2$ contained
  in $\cL_{n,q}$.
  Then, the code $\cC$ is equivalent to a generalized twisted Gabidulin code if and only if there exists an integer $s$ such that $\gcd(s,n)=1$ and the following two conditions hold
\begin{enumerate}
  \item $\dim (\cC \cap \cC^{[s]})=k-2$ and $\dim(\cC\cap \cC^{[s]} \cap \cC^{[{2s}]})=k-3$, i.e.
        there exist $p(x),q(x) \in \cC$ such that
        \[ \cC= \langle p(x)^{[s]}, p(x)^{[{2s}]}, \ldots, p(x)^{[{s(k-1)}]} \rangle_{\FF_{q^n}} \oplus \langle q(x) \rangle_{\FF_{q^n}}; \]
  \item $p(x)$ is invertible and there exists $\eta \in \FF_{q^n}^*$ with $\Nr_{q^n/q}(\eta)\neq (-1)^{kn}$ such that $p(x)+\eta p(x)^{[{sk}]} \in \cC$.
\end{enumerate}
\end{theorem}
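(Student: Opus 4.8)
The plan is to prove the two implications of the equivalence separately. One direction, ``$(1)\ \&\ (2)\Rightarrow$ twisted Gabidulin'', should fall out of Lemma~\ref{charH} after a cosmetic reindexing; for the converse I would first normalise the equivalence with $\cH_{k,s}(\eta)$ using the left idealiser of this code, and then read the two conditions off the resulting explicit form, computing the intersection dimensions as in Remark~\ref{k-2}.

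For the sufficiency, assume (1) and (2) hold for some $s$ with $\gcd(s,n)=1$, and set $p'(x):=p(x)^{[s]}$, so that the decomposition in (1) reads $\cC=\cG\oplus\langle q(x)\rangle_{\FF_{q^n}}$ with $\cG:=\langle p'(x),p'(x)^{[s]},\ldots,p'(x)^{[s(k-2)]}\rangle_{\FF_{q^n}}$. By (2), $p$ and hence $p'$ is invertible, so $\cG$ is exactly a Gabidulin-type subcode of $\cC$ with invertible seed $p'$, as demanded in Lemma~\ref{charH}. Next, $p(x)+\eta\,p(x)^{[sk]}=p'(x)^{[-s]}+\eta\,p'(x)^{[s(k-1)]}$ lies in $\cC$ by (2) and in $\langle p'^{[-s]},p'^{[s(k-1)]}\rangle_{\FF_{q^n}}$; it is not in $\cG$, since $p+\eta\,p^{[sk]}=(x+\eta x^{[sk]})\circ p$ while $\cG=\langle x^{[s]},\ldots,x^{[s(k-1)]}\rangle\circ p$, and for $2<k<n$ the monomial $x^{[0]}$ has nonzero coefficient in $x+\eta x^{[sk]}$ but coefficient $0$ in every element of $\langle x^{[s]},\ldots,x^{[s(k-1)]}\rangle$. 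Hence $p+\eta\,p^{[sk]}$ is an extra polynomial of the shape required, and Lemma~\ref{charH} gives $\cC\cong\cH_{k,s}(\eta^{[s]})$. Since $\eta\neq0$ and $\N_{q^n/q}(\eta^{[s]})=\N_{q^n/q}(\eta)\neq(-1)^{kn}$, this code is by definition a generalized twisted Gabidulin code.

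For the necessity, suppose $\cC$ is equivalent to $\cH:=\cH_{k,s}(\eta)$ with $\eta\neq0$ and $\N_{q^n/q}(\eta)\neq(-1)^{nk}$. Because $L(\cH)=\FF_{q^n}$, Proposition~\ref{idealis} restricts the left part of the equivalence: the scalar maps $\tau_\lambda$ act trivially on $\FF_{q^n}$-linear codes, and the only invertible linearized polynomials normalising the copy $\{\tau_\lambda\colon\lambda\in\FF_{q^n}\}$ of $\FF_{q^n}$ in $\cL_{n,q}$ are the monomials $\tau_a\circ x^{[i]}$, which act on an $\FF_{q^n}$-linear code as $\cD\mapsto\cD^{[i]}$; together with $\cH^{[i]}=\cH_{k,s}(\eta^{[i]})\circ x^{[i]}$ and $\cH^\sigma=\cH_{k,s}(\eta^\sigma)$ this reduces the problem to the case $\cC=\cH_{k,s}(\mu)\circ g=\langle g+\mu\,g^{[sk]},\,g^{[s]},\ldots,g^{[s(k-1)]}\rangle_{\FF_{q^n}}$ with $g$ invertible, $\mu\neq0$, $\N_{q^n/q}(\mu)=\N_{q^n/q}(\eta)$. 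This presentation yields the decomposition in (1) with $p(x)=g(x)$, $q(x)=g(x)+\mu\,g(x)^{[sk]}$, and condition (2) with the same $p$ (invertible) and this $\mu$, since $p+\mu\,p^{[sk]}=q\in\cC$. For the dimension statements I would use that right composition commutes with conjugation, $(\cD\circ g)^{[j]}=\cD^{[j]}\circ g$, so $\cC\cap\cC^{[js]}=(\cH_{k,s}(\mu)\cap\cH_{k,s}(\mu)^{[js]})\circ g$ (and similarly for the triple intersection); Remark~\ref{k-2} then gives $\dim(\cC\cap\cC^{[s]})=k-2$, while the analogue of that remark gives $\cH_{k,s}(\mu)\cap\cH_{k,s}(\mu)^{[s]}\cap\cH_{k,s}(\mu)^{[2s]}=\langle x^{[3s]},\ldots,x^{[s(k-1)]}\rangle_{\FF_{q^n}}$, i.e.\ $\dim(\cC\cap\cC^{[s]}\cap\cC^{[2s]})=k-3$. (That the two formulations of (1) agree is Theorem~\ref{mth1}.)

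The step I expect to be the main obstacle is precisely the normalisation in the necessity part: passing from ``$\cC$ equivalent to $\cH_{k,s}(\eta)$'' to the concrete form $\cC=\cH_{k,s}(\mu)\circ g$. This rests on $L(\cH_{k,s}(\eta))=\FF_{q^n}$, on Proposition~\ref{idealis}, and on identifying the normaliser of a maximal subfield of $\cL_{n,q}\cong\FF_q^{n\times n}$; the remaining pieces — the triple-intersection analogue of Remark~\ref{k-2}, and the boundary values $k\in\{2,n-1,n\}$ — are routine.
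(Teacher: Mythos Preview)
Your sufficiency argument is correct and is exactly the paper's approach: the paper's entire proof reads ``follows directly from Theorem~\ref{mth1} and Lemma~\ref{charH}'', and your reindexing $p'=p^{[s]}$ is just the bookkeeping needed to feed the decomposition of condition~(1) into Lemma~\ref{charH}.

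For the necessity direction the paper says nothing further---it implicitly regards Remark~\ref{k-2} (and its triple-intersection analogue) together with the invariance of these dimensions under equivalence as obvious, a claim made explicitly only later in Section~\ref{disting}. Your normalisation argument via the left idealiser is a legitimate and more self-contained way to establish this invariance, and the key step (any invertible $h$ with $h\{\tau_\lambda\}h^{-1}=\{\tau_\lambda\}$ is a monomial $\tau_a x^{[i]}$) is correct. One point of care at exactly the spot you flag: the identity $\cH^\sigma=\cH_{k,s}(\eta^\sigma)$ is not literally what the matrix-level action $M\mapsto M^\sigma$ gives, since $\sigma\in\Aut(\FF_q)$ need not fix the embedded copy $\{\tau_\lambda\}\cong\FF_{q^n}$ setwise; what you actually get is $L(\cH^\sigma)=L(\cH)^\sigma$, another maximal subfield of $\cL_{n,q}$ isomorphic to $\FF_{q^n}$, and you need Skolem--Noether (any two such are conjugate) to absorb the discrepancy into the right factor $g$. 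After that correction, your claimed form $\cC=\cH_{k,s}(\mu)\circ g$ with $\N(\mu)\neq(-1)^{kn}$ follows (note $\N(\mu)=\sigma(\N(\eta))$, not $\N(\eta)$, but since $(-1)^{kn}$ is fixed by $\sigma$ the conclusion is unaffected), and the rest of your argument goes through.
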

\begin{proof}
  The proof follows directly
  from Theorem~\ref{mth1}
  and Lemma~\ref{charH}.
\end{proof}

As a consequence we get the following.

\begin{theorem}
Let $\cC$ be an $\FF_{q^n}$-linear RM-code of dimension $k>2$ of $\cL_{n,q}$, with $\cC \cap \mathcal{U}_1=\{0\}$.
If there exists an integer $s$ such that $\gcd(s,n)=1$ and
\begin{enumerate}
  \item $\dim (\cC \cap \cC^{[s]})=k-2$ and $\dim(\cC\cap \cC^{[s]} \cap \cC^{[{2s}]})=k-3$, i.e.
    there exist $p(x),q(x) \in \cC$ such that
    \[ \cC= \langle p(x)^{[s]}, p(x)^{[{2s}]}, \ldots, p(x)^{[{s(k-1)}]} \rangle_{\FF_{q^n}} \oplus \langle q(x) \rangle_{\FF_{q^n}}; \]
  \item $p(x)$ is invertible and there exists $\eta \in \FF_{q^n}^*$ such that $p(x)+\eta p(x)^{[{sk}]} \in \cC$ and $\Nr_{q^n/q}(\eta)\neq (-1)^{kn}$,
\end{enumerate}
then $\cC$ is a MRD-code equivalent to $\cH_{k,s}(\eta)$.
\end{theorem}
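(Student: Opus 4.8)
The plan is to use the two hypotheses to present $\cC$ explicitly as the image of Sheekey's code $\cH_{k,s}(\eta)$ under composition by an invertible linearized polynomial, and then to deduce that $\cC$ is MRD from the proposition of Sheekey recalled earlier in Section~\ref{qpoly}. The one tool I would use repeatedly is the identity $f(x)^{[t]}=x^{[t]}\circ f(x)$, valid for every $f\in\cL_{n,q}$ and every $t$, which turns Frobenius twists into left composition and so lets the structural data be manipulated purely by composition.

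First I would unpack hypothesis (1): it says $\cC=\langle p^{[s]},p^{[2s]},\dots,p^{[s(k-1)]}\rangle_{\FF_{q^n}}\oplus\langle q\rangle_{\FF_{q^n}}$, and I would note that the $k-1$ twists $p^{[si]}$, $1\le i\le k-1$, are $\FF_{q^n}$-independent. This is where the invertibility of $p$ from hypothesis (2) is needed: composing a relation $\sum_i c_i p^{[si]}=0$ on the right by $p^{-1}$ gives $\sum_i c_i x^{[si]}=0$ among genuine monomials, and since $\gcd(s,n)=1$ and $k<n$ (hypothesis (1) already excludes $\cC=\cL_{n,q}$, because $\dim(\cC\cap\cC^{[s]})=k-2<k$), the exponents $si\bmod n$ are pairwise distinct, whence all $c_i=0$.

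Next I would set $g(x):=p(x)+\eta\,p(x)^{[sk]}$, which lies in $\cC$ by hypothesis (2), and show it extends the ``Gabidulin part''. Writing $g=(x+\eta x^{[sk]})\circ p$ and $p^{[si]}=x^{[si]}\circ p$, composition on the right by $p^{-1}$ carries $g$ to $x+\eta x^{[sk]}$ and each $p^{[si]}$ to $x^{[si]}$; since $\eta\neq 0$, the polynomial $x+\eta x^{[sk]}$ has a nonzero coefficient on $x^{[0]}$, while no element of $\langle x^{[s]},\dots,x^{[s(k-1)]}\rangle_{\FF_{q^n}}$ involves $x^{[0]}$ (every exponent there being $\not\equiv 0\pmod n$). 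Hence $g\notin\langle p^{[s]},\dots,p^{[s(k-1)]}\rangle_{\FF_{q^n}}$, so $\langle p^{[s]},\dots,p^{[s(k-1)]},g\rangle_{\FF_{q^n}}$ has dimension $k$ and therefore equals $\cC$. Composing with $p^{-1}$, which is an equivalence because $p$ is invertible, then yields $\cC\circ p^{-1}=\langle x^{[s]},\dots,x^{[s(k-1)]},x+\eta x^{[sk]}\rangle_{\FF_{q^n}}=\cH_{k,s}(\eta)$, so $\cC$ is equivalent to $\cH_{k,s}(\eta)$. Finally, since $\eta\neq 0$ and $\N_{q^n/q}(\eta)\neq(-1)^{kn}$, Sheekey's proposition gives that $\cH_{k,s}(\eta)$ is an MRD-code with the parameters of $\cG_{k,s}$, and MRD-ness is preserved by equivalence, so $\cC$ is an MRD-code equivalent to $\cH_{k,s}(\eta)$.

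I do not anticipate a genuine obstacle once hypothesis (1) is available in the displayed form: the argument is bookkeeping with $q$-polynomial supports. The single delicate point is checking that the extra polynomial $g$ is linearly independent of the twists $p^{[si]}$, and that is exactly where the strengthened assumption ``$p$ invertible'' (rather than merely nonzero) does its work, by reducing the question to the transparent one of monomial supports via composition with $p^{-1}$. One might be tempted to quote Lemma~\ref{charH} instead, but that lemma already assumes $\cC$ is MRD --- which is part of the conclusion here --- so the direct computation is the appropriate route.
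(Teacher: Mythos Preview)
Your proof is correct and follows essentially the same route the paper intends. The paper gives no explicit argument for this theorem, merely stating it ``as a consequence'' of the preceding material; the intended derivation is precisely the computation in Lemma~\ref{charH} (compose on the right by $p^{-1}$ to identify $\cC$ with $\cH_{k,s}(\eta)$) together with Sheekey's proposition for the MRD conclusion. You correctly flag that Lemma~\ref{charH} itself cannot be quoted as a black box here because it assumes MRD, and you rightly reproduce its core computation directly instead. One minor remark: your separate verification that the $p^{[si]}$ are $\FF_{q^n}$-independent via composition with $p^{-1}$ is harmless but redundant, since hypothesis~(1) already hands you the decomposition as a direct sum.
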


Note that if such invertible linearized polynomial $p(x)$ exists, then $\cC\cap\cC^{[s]}\cap\cdots\cap\cC^{[{s(k-2)}]}=\langle p(x)^{[{s(k-2)}]} \rangle_{\FF_{q^n}}$.

\section{Distinguishers for RM-codes}
\label{disting}
A \emph{distinguisher} is an easy to compute function which allows to
identify an object in a family of (apparently) similar ones.
Existence of distinguishers is of particular interest for cryptographic
applications, as it makes possible to identify a candidate encryption
from a random text.

As seen in the previous section,
it has been shown in~\cite{H-TM} that an MRD-code $\cC$ of parameters
$[n,k]$ is equivalent
to a generalized Gabidulin code if, and only if, there exists a positive integer $s$ such that $\gcd(s,n)=1$ and $\dim(\cC\cap\cC^{[s]})=k-1$.
Following the approach of \cite{H-TM}, we define for any RM-code
$\cC$ the number
\[ h(\cC):=\max\{ \dim(\cC\cap\cC^{[j]})\colon j=1,\ldots,n-1; \gcd(j,n)=1 \}. \]
Theorem~\ref{gabidulin-d} states that
an MRD-code $\cC$ is equivalent to a generalized Gabidulin code
if and only if $h(\cC)=k-1$.

Also, for any given $\FF_{q^n}$-linear code $\cC$, the following
proposition is immediate.
\begin{proposition}
  For any  $k$-dimensional $\FF_{q^n}$-linear code $\cC$,
  \[\cC^{[i]\perp}=\cC^{\perp[i]},\]
  for each $i\in \{0,\ldots,n-1\}$.
  So, we have
\[ h(\cC^\perp)=n-2k+h(\cC). \]
\end{proposition}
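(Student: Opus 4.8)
The plan is to establish the two claimed identities separately. For the first, $\cC^{[i]\perp}=\cC^{\perp[i]}$, I would work directly with the bilinear form $b(f,g)=\Tr_{q^n/q}\left(\sum_{i=0}^{n-1}f_ig_i\right)$ introduced in Section~\ref{qpoly}. The key observation is that the Frobenius map $f\mapsto f^{[i]}$ sends the coefficient vector $(f_0,\ldots,f_{n-1})$ to $(f_0^{[i]},\ldots,f_{n-1}^{[i]})$ (up to a cyclic relabeling of the indices coming from $x^{[j]}\mapsto x^{[j+i]}$, which is a permutation of coordinates and hence preserves the form), and that $\Tr_{q^n/q}(z^{[i]})=\Tr_{q^n/q}(z)$ for all $z\in\FF_{q^n}$. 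Therefore $b(f^{[i]},g^{[i]})=b(f,g)^{[i]}$, which vanishes exactly when $b(f,g)$ does. From this, $g\in\cC^{\perp[i]}$ means $g=h^{[i]}$ with $b(h,f)=0$ for all $f\in\cC$, equivalently $b(h^{[i]},f^{[i]})=0$ for all $f\in\cC$, i.e. $b(g,f')=0$ for all $f'\in\cC^{[i]}$, which is precisely $g\in\cC^{[i]\perp}$. Care must be taken with the index shift induced by $x^{[j]}\mapsto x^{[j+i]}$: since $\gcd$ is irrelevant here and we are only permuting the $n$ coefficient slots cyclically, the form $b$ is invariant under it, so the argument goes through.

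For the second identity I would use the first one together with the dimension formula for intersections of a space and its dual. Recall from Lemma~\ref{dualMRD} (applied in the $\FF_{q^n}$-linear setting, dimension $n$) that $\cC^\perp$ has dimension $n-k$. Fix $j$ with $\gcd(j,n)=1$. Then $(\cC\cap\cC^{[j]})^\perp=\cC^\perp+\cC^{[j]\perp}=\cC^\perp+\cC^{\perp[j]}$ by the first part, and applying Grassmann's formula to the $(n-k)$-dimensional spaces $\cC^\perp$ and $\cC^{\perp[j]}$ gives
\[
\dim(\cC^\perp\cap\cC^{\perp[j]})=2(n-k)-\dim(\cC^\perp+\cC^{\perp[j]})=2(n-k)-\left(n-\dim(\cC\cap\cC^{[j]})\right)=n-2k+\dim(\cC\cap\cC^{[j]}).
\]
Since this holds for each admissible $j$ and the map $j\mapsto j$ ranges over the same index set $\{j:1\le j\le n-1,\ \gcd(j,n)=1\}$ on both sides, taking the maximum over $j$ yields $h(\cC^\perp)=n-2k+h(\cC)$.

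I expect the main obstacle to be bookkeeping rather than conceptual: one must verify precisely that the Frobenius action on linearized polynomials is compatible with $b$ in the way claimed, i.e. that raising all coefficients to the $q^i$-th power (possibly after a cyclic permutation of indices) commutes with $b$ up to applying Frobenius to the scalar output. Once that compatibility lemma is nailed down, the relation $\cC^{[i]\perp}=\cC^{\perp[i]}$ is immediate, and the passage to $h$ is a routine application of Grassmann's formula plus $\dim\cC^\perp=n-k$. One should also double-check the edge behavior: the formula $h(\cC^\perp)=n-2k+h(\cC)$ presupposes $n-k>1$ (so that Lemma~\ref{dualMRD} and the MRD duality apply cleanly and the index set $\{j:\gcd(j,n)=1,\,1\le j\le n-1\}$ is nonempty), which is consistent with the standing hypotheses elsewhere in the section.
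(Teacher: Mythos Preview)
Your approach is correct and is precisely the natural argument the paper has in mind; the paper states the proposition as ``immediate'' and gives no proof, so there is nothing further to compare. One small cleanup: you invoke Lemma~\ref{dualMRD} for $\dim\cC^\perp=n-k$, but the proposition is stated for arbitrary $\FF_{q^n}$-linear codes, not just MRD ones---the dimension identity follows directly from non-degeneracy of $b$ (or of the underlying $\FF_{q^n}$-bilinear form $\sum f_ig_i$), independently of the MRD hypothesis, and your edge-case worry about $n-k>1$ is therefore unnecessary.
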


We now define also the \emph{Gabidulin index}, $\ind(\cC)$ of a $[n,k]$ RM-code
as the maximum dimension of a subcode $\cG\leq\cC$ contained in $\cC$ with
$\cG$ equivalent to a generalized Gabidulin code.

Clearly, $1\leq\ind(\cC)\leq k$ and $\ind(\cC)=k$ if and only if $\cC$
is a Gabidulin code.
It can be readily seen that if $\cC$ and $\cC'$ are two equivalent codes,
then they have the same indexes $\ind(\cC)=\ind(\cC')$ and $h(\cC)=h(\cC')$.
Also, $h(\cC)\geq\ind(\cC)-1$ for RM-codes.

We shall now prove that for the known codes the Gabidulin index can be
effectively computed.
More in detail, in the next theorem we
determine these indexes for each known $\FF_{q^n}$-linear MRD-code.
Our result is contained in Table~\ref{tab1}. Also in the table
we recall the right idealisers (up to equivalence) for these codes (see also \cite{PhDthesis}).

\begin{theorem}
  The Gabidulin indexes $\ind(\cC)$ and the values of $h(\cC)$ for the known MRD-codes $\cC$ of parameters $[n,k]$ are
  as given in Table~\ref{tab1}.
  \begin{table}[htp]
    \[
      \begin{array}{ |c|c|c|c|c| }
        \hline
\mbox{Code } & \ind & h & R & [n,k] \\ \hline
\cG_{k,s} & k & k-1 & \FF_{q^n} & [n,k] \\ \hline
\cH_{k,s}(\eta) & k-1 & k-2 & \FF_{q^{\gcd(n,k)}} & [n,k] \\ \hline
\cC_1 & 1 & 0 & \FF_{q^3} & [6,2] \\ \hline
\cC_2 & 1 & 0 & \FF_{q^2} & [6,2] \\ \hline
\cC_3 & 2 & 1 & \FF_{q^n} & [7,3] \\ \hline
\cC_4 & 1 & 0 & \FF_{q^4} & [8,2] \\ \hline
\cC_5 & 2 & 1 & \FF_{q^n} & [8,3] \\ \hline
      \end{array}\qquad\qquad
      \begin{array}{ |c|c|c|c|c| }
        \hline
\mbox{Code } & \ind & h & R & [n,k] \\ \hline
 & & & & \\ \hline
 & & & & \\ \hline
\cD_1 & 2 & 2 & \FF_{q^3} & [6,4] \\ \hline
\cD_2 & 2 & 2 & \FF_{q^2} & [6,4] \\ \hline
\cD_3 & 3 & 2 & \FF_{q^n} & [7,4] \\ \hline
\cD_4 & 3 & 4 & \FF_{q^4} & [8,6] \\ \hline
\cD_5 & 4 & 3 & \FF_{q^n} & [8,5] \\ \hline
\end{array}
\]
\caption{Known linear MRD-codes and their Gabidulin index}
\label{tab1}
\end{table}
  \end{theorem}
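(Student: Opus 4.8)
The plan is to verify Table~\ref{tab1} row by row, bounding $h(\cC)$ and $\ind(\cC)$ from both sides using the facts already at hand: Theorem~\ref{gabidulin-d} (so $h(\cC)=k-1$ characterises generalized Gabidulin codes), the inequality $h(\cC)\ge\ind(\cC)-1$, equivalently $\ind(\cC)\le h(\cC)+1$, and the duality formula $h(\cC^{\perp})=n-2k+h(\cC)$ proved above. The main auxiliary observation I would isolate first is: a subcode equivalent to $\cG_{l,s}$ lies in $\cC$ if and only if the $\FF_{q^n}$-space $W^{(s)}_{l-1}:=\cC\cap\cC^{[s]}\cap\cdots\cap\cC^{[s(l-1)]}$ contains an invertible linearized polynomial. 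Indeed, if $p,p^{[s]},\dots,p^{[s(l-1)]}\in\cC$ with $p$ invertible then $p^{[s(l-1)]}\in W^{(s)}_{l-1}$; conversely, an invertible $v\in W^{(s)}_{l-1}$ gives $p:=v^{[-s(l-1)]}$ with $p,p^{[s]},\dots,p^{[s(l-1)]}\in\cC$, and these are $\FF_{q^n}$-independent, for otherwise their span would be a nonzero $[s]$-invariant proper subspace, hence by Lemma~\ref{fixedspace} spanned by rank-one elements and thus containing no invertible map, contradicting $v$ being in it. Note also that every $[n,k]$ MRD code with $k\ge1$ contains an invertible polynomial (Corollary~\ref{weight} with $l=k-1$), so $\ind(\cC)\ge1$ always.

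First I would settle the two infinite families. For $\cG_{k,s}$ the support is the arithmetic progression $E=\{0,s,\dots,(k-1)s\}\bmod n$, and a short count of $|E\cap(E+j)|$ for $j$ coprime to $n$ shows this is always $\le k-1$, with equality at $j=s$, so $h(\cG_{k,s})=k-1$; and $\ind(\cG_{k,s})=k$ is immediate. For $\cH_{k,s}(\eta)$ with $\eta\ne0$, $\mathrm N(\eta)\ne(-1)^{kn}$ and $1<k<n-1$, Remark~\ref{k-2} gives $\dim(\cH_{k,s}(\eta)\cap\cH_{k,s}(\eta)^{[s]})=k-2$, so $h\ge k-2$; and $h=k-1$ would make $\cH_{k,s}(\eta)$ a generalized Gabidulin code by Theorem~\ref{gabidulin-d}, contradicting its inequivalence (cf.\ \cite{Sheekey,LTZ}), whence $h(\cH_{k,s}(\eta))=k-2$. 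The subcode $\langle x^{[s]},\dots,x^{[s(k-1)]}\rangle$, being $x^{[s]}$ post-composed with $\cG_{k-1,s}$, is equivalent to $\cG_{k-1,s}$, so $\ind\ge k-1$, and then $\ind\le h+1=k-1$ forces equality. The right idealisers $\FF_{q^n}$ and $\FF_{q^{\gcd(n,k)}}$ are recalled from \cite{LTZ2,Sheekey}.

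Next I would compute $h$ for the finite list by direct linear algebra: writing a generic codeword as the coefficient vector of a linearized polynomial and matching coefficients against those of $\cC_i^{[j]}$ for each $j$ coprime to $n$ yields $h(\cC_1)=h(\cC_2)=h(\cC_4)=0$ and $h(\cC_3)=h(\cC_5)=1$ (for the monomial codes $\cC_3,\cC_5$ this is again just the count $|E\cap(E+j)|$). The dual values follow from $h(\cD_i)=n-2(\dim\cC_i)+h(\cC_i)$, giving $h(\cD_1)=h(\cD_2)=h(\cD_3)=2$, $h(\cD_4)=4$, $h(\cD_5)=3$. For the $\ind$ lower bounds I would exhibit explicit generalized-Gabidulin subcodes: $\langle x,x^{[s]}\rangle=\cG_{2,s}$ in $\cC_3,\cC_5$; $\langle x^{[1]},x^{[2]}\rangle$ in $\cD_1$ and $\langle x^{[1]},x^{[2]},x^{[3]}\rangle$ in $\cD_4$ (each a Frobenius power composed with a Gabidulin code, hence equivalent to one); $\langle x^{[2s]},x^{[3s]},x^{[4s]}\rangle\cong\cG_{3,s}$ in $\cD_3$ and $\langle x^{[2s]},x^{[3s]},x^{[4s]},x^{[5s]}\rangle\cong\cG_{4,s}$ in $\cD_5$; and for $\cD_2$ the subcode $\langle -\delta x^{[1]}+x^{[3]},-\delta x^{[2]}+x^{[4]}\rangle$ equals $\cG_{2,1}\circ p$ for the invertible $p=-\delta x^{[1]}+x^{[3]}$ (invertibility following from $q$ odd and $\delta^2+\delta=1$), hence is equivalent to $\cG_{2,1}$. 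For every code except $\cD_1,\cD_2,\cD_4$ these lower bounds already match $h(\cC)+1$, so $\ind\le h+1$ closes the case; the right-idealiser column is recalled from \cite{LTZ2,PhDthesis} and the construction papers of Table~\ref{kMRD}.

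The hard part will be the remaining three upper bounds $\ind(\cD_1)=\ind(\cD_2)=2$ and $\ind(\cD_4)=3$, for which the bound $\ind\le h+1$ is not tight. Here I would invoke the auxiliary observation from the first paragraph: for $\cD_1,\cD_2$ (with $n=6$, $k=4$, admissible steps $s\in\{1,5\}$) it suffices to compute the triple intersection $\cD_i\cap\cD_i^{[s]}\cap\cD_i^{[2s]}$ and check it contains no invertible polynomial; in fact a coefficient computation shows it is the zero space, so $\cG_{3,s}\not\subseteq\cD_i$. For $\cD_4$ (with $n=8$, $k=6$, steps $s\in\{1,3,5,7\}$) one computes the quadruple intersection $\cD_4\cap\cD_4^{[s]}\cap\cD_4^{[2s]}\cap\cD_4^{[3s]}$ and verifies it has no invertible element, which simultaneously excludes $\cG_{5,s}\subseteq\cD_4$ since any copy of $\cG_{5,s}$ contains a copy of $\cG_{4,s}$. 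The $\cD_4$ computation is the principal obstacle: it is finite and elementary, but requires care in tracking the Frobenius twists on the coefficients through the repeated applications of $[s]$.
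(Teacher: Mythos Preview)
Your overall strategy is sound and the computations you propose are correct: for each of the hard cases $\cD_1,\cD_2,\cD_4$ the iterated Frobenius intersection you describe is indeed the zero space (for every admissible $s$), which rules out a generalized Gabidulin subcode of the relevant dimension. This is a genuinely different route from the paper's. The paper does not compute $\cC\cap\cC^{[s]}\cap\cdots\cap\cC^{[s(l-1)]}$; instead it argues that a putative $(k-1)$-dimensional Gabidulin subcode $\overline{\cD}$ would force $\cD_i\cap\cD_i^{[s]}=\overline{\cD}\cap\overline{\cD}^{[s]}$ (and likewise for $[-s]$), reconstructs the unique candidate $\overline{\cD}$ from these two $(k-2)$-dimensional pieces, and then shows that this explicit $\overline{\cD}$ is not MRD (via an element of large kernel for $\cD_1$ and $\cD_4$, via the right idealiser for $\cD_2$). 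For $\cD_4$ the paper then treats $\cG_{4,s}$ separately by a coefficient-matching argument. Your approach is more uniform and avoids the ad hoc devices (right idealisers, passage to the Delsarte dual); the paper's approach, on the other hand, keeps the intersections two at a time and never needs the triple/quadruple computation. Your explicit $\cG_{2,1}$-subcode of $\cD_2$ is also different from (and simpler than) the one the paper exhibits.

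One point in your write-up does need repair. In your auxiliary observation you justify the $\FF_{q^n}$-independence of $p,p^{[s]},\dots,p^{[s(l-1)]}$ by saying that otherwise ``their span would be a nonzero $[s]$-invariant proper subspace, hence by Lemma~\ref{fixedspace} spanned by rank-one elements and thus containing no invertible map''. Neither step is correct as stated: the span of these $l$ polynomials is not obviously $[s]$-invariant (you would need to pass to the shortest dependent initial segment), and more seriously, being spanned by rank-one elements does \emph{not} preclude containing an invertible map --- Lemma~\ref{l-basis} shows that $\cL_{n,q}$ itself has a basis in $U_1$. The independence is nevertheless immediate, because $\langle p,p^{[s]},\dots,p^{[s(l-1)]}\rangle=\cG_{l,s}\circ p$ and right composition with the invertible $p$ is a vector-space isomorphism of $\cL_{n,q}$, so the dimension is $l$. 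With this fix (or simply by invoking Lemma~\ref{Gablemma}) your auxiliary criterion stands, and the rest of your plan goes through.
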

  \begin{proof}
    Clearly, the Gabidulin index of a generalized Gabidulin code is $k$;
    any twisted generalized Gabidulin code of dimension $k$ contains
    a generalized Gabidulin code of dimension $k-1$;
    so, its index is $k-1$.

    We now consider the case of the codes
    $\cC_1, \cC_2, \cC_3, \cC_4, \cC_5, \cD_3$ and $\cD_5$.
    By construction, it is immediate to see that they all contain a
    generalized Gabidulin code of codimension $1$; so, they also
    have Gabidulin index $k-1$, where $k$ is the dimension of the code.
    Also for all of them $k-2\leq h(\cC)<k-1$, so $h(\cC)=k-2$.

    The cases of the dual codes $\cD_{i}$ with $i=1,2,4$ must
    be studied in more detail.
    First we prove that
    the codes $\cD_1, \cD_2$ and $\cD_4$ do not contain any code equivalent to $\cG_{k-1,s}$, for any $s$, i.e. that their Gabidulin index is less than $k-1$
    and then determine the exact value.

\begin{flushleft}
  \textbf{The code $\cD_1$}
\end{flushleft}

By Table~\ref{DkMRD}, we have that
\[ \cD_1=\langle x^{[1]}, x^{[{2}]}, x^{[{4}]},x-\delta^{[{5}]} x^{[{3}]} \rangle_{\FF_{q^6}}. \]
Suppose that there is a code $\overline{\cD}$ contained in $\cD_1$
equivalent to a generalized Gabidulin code of dimension $3$, i.e. either $\overline{\cD}\simeq \cG_{3,1}$ or $\overline{\cD}\simeq \cG_{3,5}$.
Since $\cG_{3,1}$ and $\cG_{3,5}$ are equivalent, then $\overline{\cD}$
is equivalent to $\cG_{3,1}$.
By Theorem~\ref{gabidulin-d},
$h({\overline{\cD}})=2$; on the other hand, since $\cD_1$ is
not equivalent to a Gabidulin code it must be $h(\cD)< 3$.
So, $\cD_1\cap \cD_1^{[1]} = \overline{\cD}\cap \overline{\cD}^{[1]}$ and hence $\cD_1\cap\cD_1^{[5]}=\overline{\cD}\cap\overline{\cD}^{[5]}$. From these equalities we get
\[ \overline{\cD}\cap \overline{\cD}^{[1]}= \langle x^{[{2}]}, x^{[1]}-\delta x^{[4]}\rangle_{\FF_{q^6}} \]
and
\[ \overline{\cD}^{[5]}\cap \overline{\cD}=\langle x^{[1]}, x-\delta^{[5]} x^{[3]} \rangle_{\FF_{q^6}}. \]
Since $\dim \overline{\cD}=3$ we obtain
\[ \overline{\cD}= \langle x^{[1]}, x^{[{2}]}, x^{[1]}-\delta x^{[4]}\rangle_{\FF_{q^6}}= \langle x^{[1]}, x^{[{2}]}, x^{[4]}\rangle_{\FF_{q^6}}.\]
The code $\overline{\cD}$ is not MRD, since it contains the polynomial $x^{[1]}-x^{[4]}$ which has kernel of dimension $3$, in particular it cannot be equivalent to $\cG_{3,1}$. It follows that $\ind(\cD_1)=2$ since $\langle  x^{[1]}, x^{[{2}]} \rangle_{\FF_{q^6}} \simeq \mathcal{G}_{2,1}$.
\smallskip

\begin{flushleft}
  \textbf{The code $\cD_2$}
\end{flushleft}

By Table~\ref{DkMRD} the code $\cD_2$ is
\[ \cD_2=\langle x^{[1]},x^{[3]},x-x^{[2]},x^{[4]}-\delta x \rangle_{\FF_{q^6}}, \]
with $q$ odd, $\delta^2+\delta=1$ and  $q \equiv 0,\pm 1 \pmod{5}$, hence $\delta \in \FF_q$.
Suppose $\ind(\cD_2)=3$, as before $\cD_2$ contains a code $\overline{\cD}$ equivalent to $\cG_{3,1}$. 
Arguing as the previous case, we get
\[ \overline{\cD}= \langle -x+x^{[2]}, x^{[3]}-\delta x^{[1]}, -x^{[1]}+x^{[3]} \rangle_{\FF_{q^6}}= \langle x^{[1]}, x^{[3]}, -x+x^{[2]} \rangle_{\FF_{q^6}}. \]
To show that $\overline{\cD}$ is not equivalent to any $\cG_{3,s}$ we
compute its right idealiser $R(\overline{\cD})$.
Write $\displaystyle \varphi (x)=\sum_{i=0}^5 a_ix^{[i]} \in R(\overline{\cD})$;
then $x^{[1]}\circ \varphi(x), x^{[3]}\circ \varphi(x) \in \overline{\cD}$, so $\varphi(x)=\eta x$, for some $\eta \in \FF_{q^6}$.
Furthermore, $(x-x^{[2]}) \circ \varphi(x) \in \overline{\cD}$;
so $\eta=\eta^{[2]}$ and $\eta \in \FF_{q^2}$.
So, we get
$R(\overline{\cD}) \simeq \FF_{q^2}$.
If $\overline{\cD}$ were to be equivalent to $\cG_{3,1}$, by Proposition \ref{idealis} and by \cite[Corollary 5.2]{LTZ2}, it would follow that $R(\overline{\cD})$ is equivalent to $R(\cG_{3,1}) \simeq \FF_{q^6}$, which is not possible.
Suppose now
$\cD_2$ to contain a code $\overline{\cD}$ equivalent to $\cG_{2,1}$. Then by Theorem \ref{gabidulin-d} and by Lemma~\ref{Gablemma} we easily get $\overline{\cD}=\langle f(x), f(x)^{[1]} \rangle_{\FF_{q^6}}$ with $f(x)$ an invertible linearized polynomial.
Also,
$\overline{\cD}\cap\overline{\cD}^{[1]}=\langle f(x)^{[1]} \rangle \subset \cD_2\cap\cD_2^{[1]}=\langle -x^{[1]}+x^{[3]}, x^{[4]}-\delta x^{[2]} \rangle_{\FF_{q^6}}$, so $f(x)^{[1]}=a(-x^{[1]}+x^{[3]})+b(x^{[4]}-\delta x^{[2]})$, since $f(x)$ is invertible we may assume $b=1$.
In particular, $\cD_2$ contains a code equivalent to $\cG_{2,1}$ if and only if there exists $a \in \FF_{q^6}$ such that $f(x)^{[1]}$ is invertible.
Let $D_{f^{[1]}}$ be the Dickson matrix associated to the polynomial $f(x)^{[1]}$
considered above. Then, for $a=1$
we have $\det D_{f^{[1]}} =16 (2-3\delta)\neq 0$.
So,  $\cD_2$ contains $\langle -x-\delta x^{[1]} +x^{[2]}+x^{[3]} , -x^{[1]}+x^{[3]}+x^{[4]}-\delta x^{[2]}\rangle_{\FF_{q^6}}\simeq\cG_{2,1}$ and,
consequently, $\ind(\cD_2)=2$.
\smallskip

\begin{flushleft}
  \textbf{The code $\cD_4$}
\end{flushleft}

The code $\cD_4$ is
\[ \cD_4=\langle x^{[1]},x^{[2]},x^{[3]},x^{[5]},x^{[6]},x-\delta x^{[4]} \rangle_{\FF_{q^8}}, \]
with $q$ odd and $\delta^2=-1$.
Suppose that $\cD_4$ contains a code $\overline{\cD}$ equivalent to a generalized Gabidulin code of dimension $5$.
Since $\cG_{5,1} \simeq \cG_{5,7}$ and $\cG_{5,3} \simeq \cG_{5,5}$, we get that either $\overline{\cD}\simeq \cG_{5,1}$ or $\overline{\cD}\simeq \cG_{5,3}$.
By
Lemma~\ref{Gablemma},
$\dim (\overline{\cD}\cap \overline{\cD}^{[s]})=4$, with either $s=1$ or $s=3$, and, since $\cD_4$ is not equivalent to any generalized Gabidulin code, $\dim(\cD_4 \cap \cD_4^{[s]}) <5$, so $\cD_4\cap \cD_4^{[s]} = \overline{\cD}\cap \overline{\cD}^{[s]}$.
First assume that $\overline{\cD}\simeq \cG_{5,1}$.
It is easy to see that
\[\overline{\cD}\cap \overline{\cD}^{[1]}= \langle x^{[2]},x^{[3]},x^{[6]},x^{[1]}-\delta^{[1]} x^{[5]} \rangle_{\FF_{q^8}}.\]
Since the dimension of $\overline{\cD}$ is $5$ and $x^{[1]} \in \overline{\cD} \setminus (\overline{\cD}\cap \overline{\cD}^{[1]})$, it follows that
\[ \overline{\cD}=\langle x^{[1]}, x^{[2]},x^{[3]},x^{[6]},x^{[1]}-\delta^{[1]} x^{[5]} \rangle_{\FF_{q^8}}= \langle  x^{[1]}, x^{[2]}, x^{[3]}, x^{[5]}, x^{[6]} \rangle_{\FF_{q^8}}. \]
The Delsarte dual  of $\overline{\cD}$ is
\[ \overline{\cD}^\perp =\langle x,x^{[4]},x^{[7]} \rangle_{\FF_{q^8}}, \]
which is not MRD, since of $x-x^{[4]}$ has kernel of dimension $4$. By Lemma \ref{dualMRD}, neither $\overline{\cD}$ is an MRD-code, a contradiction.
Now, assume $\overline{\cD}\simeq \cG_{5,3}$. As before,
\[\overline{\cD}\cap \overline{\cD}^{[3]}= \langle x^{[1]}, x^{[5]}, x^{[6]}, x-\delta x^{[4]} \rangle_{\FF_{q^8}}\]
and
\[\overline{\cD}^{[5]}\cap \overline{\cD}= \langle x^{[6]}, x^{[2]}, x^{[3]}, x^{[5]}-\delta^{[5]} x^{[1]} \rangle_{\FF_{q^8}}.\]
So,
\[ \overline{\cD}=\langle x^{[1]}, x^{[6]}, x^{[2]}, x^{[3]}, x^{[5]}-\delta^{[5]} x^{[1]} \rangle_{\FF_{q^8}}= \langle x^{[1]}, x^{[2]}, x^{[3]}, x^{[5]}, x^{[6]}\rangle_{\FF_{q^8}}. \]

\noindent Again we get a contradiction since $\overline{\cD}$ is not an MRD-code.\\

Suppose now that $\cD_4$ contains a code $\overline{\cD}$ equivalent to $\cG_{4,1}$. By Theorem \ref{gabidulin-d} and by
Lemma~\ref{Gablemma},
 $\overline{\cD}=\langle p(x),p(x)^{[1]},p(x)^{[2]}, p(x)^{[3]} \rangle_{\FF_{q^8}}$ for some invertible linearized polynomial $p(x)\in \cD_4$. Clearly, $\langle p(x)^{[1]},p(x)^{[2]},p(x)^{[3]} \rangle_{\FF_{q^8}} \subset \langle x^{[2]},x^{[3]},x^{[6]},x^{[1]}-\delta^{[1]} x^{[5]} \rangle_{\FF_{q^8}}=\cD_4\cap\cD_4^{[1]}$ and so there exist $a,b,c,d \in \FF_{q^8}$ such that
\[p(x)^{[1]}= ax^{[2]}+bx^{[3]}+cx^{[6]}+d(x^{[1]}-\delta^{[1]} x^{[5]}),\]
\[p(x)^{[2]}=a^{[1]} x^{[3]}+b^{[1]}x^{[4]}+c^{[1]}x^{[7]}+d^{[1]}(x^{[2]}-\delta^{[2]} x^{[6]}),\]
\[p(x)^{[3]}=a^{[2]} x^{[4]}+b^{[2]}x^{[5]}+c^{[2]}x+d^{[2]}(x^{[3]}-\delta^{[3]} x^{[7]}).\]

Since these are all elements of $\cD_4$, we get $a=b=c=d=0$, i.e. $\cD_4$ cannot contain a code equivalent to $\cG_{4,1}$. Finally, suppose that $\overline{\cD}$ is equivalent to $\cG_{4,3}$. By Theorem \ref{gabidulin-d} and by Lemma~\ref{Gablemma},
$\overline{\cD}=\langle p(x),p(x)^{[3]},p(x)^{[6]}, p(x)^{[1]} \rangle_{\FF_{q^8}}$ for some invertible linearized polynomial $p(x)\in \cD_4$ and arguing as before we get a contradiction, i.e. $\cD_4$ cannot contain a code equivalent to $\cG_{4,3}$. So, $\cD_4$ cannot contain a code equivalent to a generalized Gabidulin code of dimension $4$ and so $\ind(\cD_4)<4$.
Since $\langle x^{[1]},x^{[2]},x^{[3]}\rangle_{\FF_{q^8}}\simeq \mathcal{G}_{3,1}$, it follows $\ind(\cD_4)=3$.
\end{proof}


Thus Theorem \ref{mth1} provides the following structure result on $k$-dimensional $\FF_{q^n}$-linear RM-codes with $h(\cC)=k-2$.
\begin{theorem}\label{mth2}
  Let $\cC$ be a $k$-dimensional $\FF_{q^n}$-linear RM-code of $\cL_{n,q}$ having $h(\cC)=k-2$, with $k>2$.
  Denote by $s$ an integer such that $\gcd(s,n)=1$ and $\dim(\cC\cap\cC^{[s]})=k-2$.
  Let $V:=\cC \cap \cC^{[s]}$ and suppose that $\cC \cap \mathcal{U}_1 =\{0\}$, then $\cC$ has one of the following forms
\begin{enumerate}
  \item if $\dim (V \cap V^{[s]})=k-3$, then there exist $p(x)$ and $q(x)$ in $\cC$ such that
        \[ \cC = \langle p(x), p(x)^{[s]}, \ldots, p(x)^{[{s(k-2)}]} \rangle_{\FF_{q^n}} \oplus \langle q(x) \rangle_{\FF_{q^n}}; \]
  \item if $\dim (V \cap V^{[s]})=k-4$, then there exist $p(x)$ and $q(x)$ in $\cC$ such that
        \[ \cC = \langle p(x), p(x)^{[s]}, \ldots, p(x)^{[{s(i-1)}]} \rangle_{\FF_{q^n}} \oplus \langle q(x), q(x)^{[s]}, \ldots, q(x)^{[{s(j-1)}]} \rangle_{\FF_{q^n}}, \]
        where $i+j=k$ and $i,j \geq 2$.
\end{enumerate}
  In particular, $\cC$ is equivalent to $\cH_{k,s}(\eta)$, for some $\eta \in \FF_{q^n}$, if and only if $\dim (V \cap V^{[s]})=k-3$, $p(x)$ is invertible and there exists $\eta \in \FF_{q^n}^*$ such that $p(x)+\eta p(x)^{[{sk}]} \in \cC$ and $\Nr_{q^n/q}(\eta)\neq (-1)^{kn}$.
\end{theorem}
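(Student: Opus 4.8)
The plan is to deduce the structural dichotomy directly from Theorem~\ref{mth1}, and then to obtain the characterization of twisted Gabidulin codes by feeding the resulting normal form into Lemma~\ref{charH}.

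First I would unwind the hypothesis $h(\cC)=k-2$. By definition of $h(\cC)$ there is an integer $s$ with $\gcd(s,n)=1$ and $\dim(\cC\cap\cC^{[s]})=k-2$, and this is exactly the integer named in the statement. Setting $V:=\cC\cap\cC^{[s]}$, the triple $(\cC,s,V)$ satisfies all the hypotheses of Theorem~\ref{mth1}: $\cC$ is a $k$-dimensional $\FF_{q^n}$-subspace of $\cL_{n,q}$ with $k>2$, $\dim V=k-2$, and $\cC\cap\mathcal{U}_1=\{0\}$. One also checks $V\neq V^{[s]}$, for otherwise Lemma~\ref{fixedspace} would give a basis of $V$ consisting of rank-one polynomials, contradicting $V\cap\mathcal{U}_1=\{0\}$ (and $V\neq\{0\}$ since $k-2\geq 1$). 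Hence, applying Grassmann's identity inside $\cC^{[s]}$, $\dim(V\cap V^{[s]})\in\{k-3,k-4\}$, and Theorem~\ref{mth1} yields precisely forms (1) and (2) according to which of the two values occurs. So the first part of the statement is a verbatim instance of Theorem~\ref{mth1} and needs no further work.

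For the ``in particular'' clause I would argue the two implications separately. For the forward direction, condition~(1) puts $\cC$ in form~(1), so $\cG:=\langle p(x),p(x)^{[s]},\ldots,p(x)^{[s(k-2)]}\rangle_{\FF_{q^n}}$ is, by Lemma~\ref{Gablemma} together with the invertibility of $p(x)$, equivalent to $\cG_{k-1,s}$, and $\cC$ is obtained from $\cG$ by adjoining one polynomial of extra type in the sense of Lemma~\ref{ll0}. Condition~(2) says this extra polynomial may be normalized so as to straddle the orbit of $p(x)$ exactly in the shape required by Lemma~\ref{charH}, with the norm condition $\Nr_{q^n/q}(\eta)\neq(-1)^{kn}$; Lemma~\ref{charH} then gives that $\cC$ is equivalent to $\cH_{k,s}(\eta)$ (up to the harmless replacement of $\eta$ by $\eta^{[s]}$, which produces an equivalent twisted Gabidulin code). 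For the converse, start from $\cC$ equivalent to some $\cH_{k,s}(\eta)$ with $\eta\neq 0$ and $\Nr_{q^n/q}(\eta)\neq(-1)^{kn}$: Remark~\ref{k-2} gives $\dim(\cC\cap\cC^{[s]})=k-2$; computing $V$ and $V^{[s]}$ from the (twisted) generating set as in Remark~\ref{k-2} gives $\dim(V\cap V^{[s]})=k-3$; and the explicit generators of $\cH_{k,s}(\eta)$ exhibit both the invertible polynomial $p(x)$ produced by form~(1) and the extra polynomial carrying the norm condition, so conditions~(1)--(2) hold.

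The delicate point is not the invocation of Theorem~\ref{mth1} but the bookkeeping in the previous paragraph: one must fix the normalization of $p(x)$ and of the extra polynomial so that Lemma~\ref{charH} applies literally, and in particular verify that the extra polynomial genuinely lies outside the orbit $\langle p(x),\ldots,p(x)^{[s(k-2)]}\rangle_{\FF_{q^n}}$. After composing with $p^{-1}(x)$ this reduces to checking that the relevant power $q^{sk}$ is a ``new'' exponent modulo $n$, which uses $\gcd(s,n)=1$ together with $k<n$ (the extreme case $k=n$ being excluded since then $\cC=\cL_{n,q}$ and $h(\cC)=k\neq k-2$). For the converse one also has to make sure that the particular $s$ singled out by $h(\cC)=k-2$ is compatible with the twisted Gabidulin structure, since $\cH_{k,s}(\eta)$ and $\cH_{k,s'}(\eta')$ can coincide up to equivalence for different $s,s'$; this matching of the exponent $s$ and the twist parameter is where I expect the argument to require the most care, everything else being a direct composition of Theorem~\ref{mth1}, Lemma~\ref{Gablemma}, Lemma~\ref{ll0} and Lemma~\ref{charH}.
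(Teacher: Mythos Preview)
Your proposal is correct and matches the paper's approach: the paper does not give an explicit proof of this theorem but introduces it with ``Thus Theorem~\ref{mth1} provides the following structure result\ldots'', treating the dichotomy as an immediate restatement of Theorem~\ref{mth1} under the hypothesis $h(\cC)=k-2$, with the ``in particular'' clause coming from the earlier characterization via Lemma~\ref{charH}. Your write-up supplies the bookkeeping (checking $V\neq V^{[s]}$, the Grassmann count, the normalization needed for Lemma~\ref{charH}, and the converse via Remark~\ref{k-2}) that the paper leaves implicit.
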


\begin{remark}
Note that, in the hypothesis of Theorem \ref{mth2}, if the polynomials $p(x)$ and $q(x)$ are invertible,
then
either $\ind(\cC)=\dim\cC-1$ or $\displaystyle \ind(\cC)\geq\frac{\dim\cC}2$.
This holds for the known MRD-codes listed in the Tables \ref{kMRD} and \ref{DkMRD}; it is currently an open question whether an $\FF_{q^n}$-linear MRD-code $\cC$ having $h(\cC)=\dim\cC-2$ and $\ind(\cC)<\frac{\dim\cC}{2}$
might exist or not.
We also remark that  the known MRD-codes presented in the Tables \ref{kMRD} and \ref{DkMRD} which are not equivalent to a generalized Gabidulin code, have
$h(\cC)=\dim\cC-2$.

Suppose a code $\cC$ has generator matrix in standard form
$[I_k|X]$. Using the arguments of~\cite[Lemma 19]{H-TNRR} it can be
seen that $\dim(\cC\cap\cC^{[s]})\geq \dim\cC-i$ with $i>0$ if and only
if $rk(X-X^{[s]})\leq i$, and this condition
can be expressed by imposing that all minors of $X-X^{[s]}$ of rank $j>i$
have determinant $0$. In particular, the set of all codes
with $h(\cC)\geq\dim(\cC)-i$ is contained in the union of a finite number of
closed Zariski sets.
So, for a generic MRD-code we have $h(\cC)\in \max\{0,2k-n\}$.
We leave as an open problem to determine some
families of MRD-codes with $h(\cC)<\dim(\cC)-2$ and, more in detail,
to determine the possible spectrum of the values of $h(\cC)$ might
attain as $\cC$ varies among all MRD-codes over a given field.
\end{remark}

\section*{Acknowledgements}
  We thank an anonymous referee of the paper for having suggested
  a short and elegant proof for Lemma~\ref{fixedspace}.

\vskip.2cm
\noindent
\begin{minipage}[t]{\textwidth}
Authors' addresses:
\vskip.2cm\noindent\nobreak
\centerline{
\begin{minipage}[t]{7cm}
Luca Giuzzi\\
D.I.C.A.T.A.M. {\small (Section of Mathematics)} \\
University of Brescia\\
Via Branze 43, I-25123, Brescia, Italy \\
luca.giuzzi@unibs.it
\end{minipage}
\begin{minipage}[t]{7.5cm}
Ferdinando Zullo\\
Department of Mathematics and Physics \\
University of Campania ``\emph{Luigi Vanvitelli}'' \\
Viale Lincoln 5, I-81100, Caserta, Italy \\
ferdinando.zullo@unicampania.it
\end{minipage}
}

\end{minipage}

\end{document}